\theoremstyle{plain}
   \newtheorem{teo}{Theorem}
   \newtheorem{coro}[teo]{Corollary}
   \newtheorem{lema}[teo]{Lemma}
   \newtheorem{propo}[teo]{Proposition}
\theoremstyle{definition}
\theoremstyle{remark}
 \newtheorem{obs}{Remark}
 \newtheorem{afirmacion}{Claim}
\numberwithin{equation}{section}
\begin{document}
	
	\title[New mixed inequalities for maximal operators]{From $A_1$ to $A_\infty$: new mixed inequalities for certain maximal operators}

	% Information for first author
	\author[F. Berra]{Fabio Berra}
	\address{CONICET and Departamento de Matem\'{a}tica (FIQ-UNL),  Santa Fe, Argentina.}
	\email{fberra@santafe-conicet.gov.ar}
	
%	%Information for second author
%	\author[M. Carena]{Marilina Carena}
%	\address{CONICET and Departamento de Matem\'{a}tica (FIQ-UNL),  Santa Fe, Argentina.}
%	\email{marilcarena@gmail.com}
%	
%	%Information for third author
%	\author[G. Pradolini]{Gladis Pradolini}
%	\address{CONICET and Departamento de Matem\'{a}tica (FIQ-UNL),  Santa Fe, Argentina.}
%	\email{gladis.pradolini@gmail.com}

	\thanks{The author was supported by CONICET and UNL}
	
	\subjclass[2010]{26A33, 42B25}
	
	\keywords{Young functions, maximal operators, Muckenhoupt weights, fractional operators}
	
	%-------------ABSTRACT ------------------------------------
	\begin{abstract}
		In this article we prove mixed inequalities for maximal operators associated to Young functions, which are an improvement of a conjecture established in \cite{Berra}. Concretely, given $r\geq 1$, $u\in A_1$, $v^r\in A_\infty$ and a Young function $\Phi$ with certain properties, we have that inequality
		\[uv^r\left(\left\{x\in \mathbb{R}^n: \frac{M_\Phi(fv)(x)}{M_\Phi v(x)}>t\right\}\right)\leq C\int_{\mathbb{R}^n}\Phi\left(\frac{|f(x)|}{t}\right)u(x)v^r(x)\,dx\]
		holds for every positive $t$. The involved operator $\frac{M_\Phi(fv)(x)}{M_\Phi v(x)}$ seems to be an adequate extension when $v^r\in A_\infty$, since when we assume $v^r\in A_1$ we can replace $M_\Phi v$ by $v$, yielding a mixed inequality for $M_\Phi$ proved in \cite{Berra-Carena-Pradolini(MN)}.
		
		As an application, we furthermore exhibe and prove mixed inequalities for the generalized fractional maximal operator $M_{\gamma,\Phi}$, where $0<\gamma<n$ and $\Phi$ is a Young function of $L\log L$ type.

	\end{abstract}

	\maketitle
	
	\section*{Introduction}
	
	One of the most classical and extensively studied problem in Harmonic Analysis is the characterization of all the functions $w$ for which the Hardy-Littlewood maximal operator is bounded in $L^p(w)$, for $1<p<\infty$. This problem was first solved by B. Muckenhoupt in \cite{Muck72}, where the author proved that the inequality
	\begin{equation}\label{eq: intro - tipo fuerte de M}
	\int_{\mathbb{R}}(Mf(x))^pw(x)\,dx\leq C\int_{\mathbb{R}}|f(x)|^pw(x)\,dx
	\end{equation}
	holds for $1<p<\infty$ if and only if $w\in A_p$. Later on, this result was extended to higher dimensions and even to spaces of homogeneous type. 
	
	It is well known that, for the limit case $p=1$, the inequality above is not true. Instead, we have the estimate
	\[w(\left\{x\in \mathbb{R}^n: Mf(x)>t\right\})\leq \frac{C}{t}\int_{\mathbb{R}^n} |f(x)|w(x)\,dx.\]

	In \cite{Sawyer} Sawyer proved that if $u,v$ are $A_1$ weights, then the estimate
	\begin{equation}\label{eq: intro - desigualdad mixta Sawyer}
	uv\left(\left\{x\in \mathbb{R}: \frac{M(fv)(x)}{v(x)}>t\right\}\right)\leq \frac{C}{t}\int_{\mathbb{R}} |f(x)|u(x)v(x)\,dx
	\end{equation}
	holds for every positive $t$. From now on, we will refer to this type of inequalities as mixed because of the interaction of two different weights in it. This estimate can be seen as the weak $(1,1)$ type of the operator $Sf=M(fv)/v$, with respecto to the measure $d\mu(x)=u(x)v(x)\,dx$. One of the motivation to study this kind of estimate was the fact that inequality \eqref{eq: intro - desigualdad mixta Sawyer} combined with the Jones' factorization theorem and the Marcinkiewicz interpolation theorem allows to give, in a very easy way, a proof of \eqref{eq: intro - tipo fuerte de M} when we assume $w\in A_p$. 
	
	The proof of \eqref{eq: intro - desigualdad mixta Sawyer} is, however, a bit tricky. Since $S$ can be seen as the product of two functions, this produces a perturbation of the level sets of $M$ by the weight $v$. So it is not clear that classical covering lemmas or decomposition techniques work in this case. To overcome this difficulty, the author uses a decomposition of level sets into an adequate class of intervals with certain properties, called ``principal intervals'', an idea that was already used to prove some weak estimates previously in \cite{M-W-76}. 
	
	It was also cojectured in \cite{Sawyer} that an analogous estimate to \eqref{eq: intro - desigualdad mixta Sawyer} should still hold for the Hilbert transform.
	
	This claim was proved twenty years later by Cruz-Uribe, Martell and P\'erez in \cite{CruzUribe-Martell-Perez}. In this paper mixed weak inequalities were given, generalizing \eqref{eq: intro - desigualdad mixta Sawyer} to $\mathbb{R}^n$, not only for $M$ but also for Calder\'on-Zygmund operators (CZO) and proving the conjecture made by Sawyer. The authors considered two different type of hypotheses on the weights $u$ and $v$: $u,v\in A_1$ and $u\in A_1$ and $v\in A_\infty(u)$. For the first condition, the proof follows similar lines as in \cite{Sawyer}. On the other hand, the second condition is more suitable, since it implies that the product $uv$ belongs to $A_\infty$ and therefore is a doubling measure. This allows to apply classical techniques, like Calder\'on-Zygmund decomposition to achieve the estimate. Another conjecture arose from this work: the authors claimed that the mixed estimate
	\[uv\left(\left\{x\in \mathbb{R}^n: \frac{M(fv)(x)}{v(x)}>t\right\}\right)\leq \frac{C}{t}\int_{\mathbb{R}^n} |f(x)|u(x)v(x)\,dx\]
	should still hold under the weaker assumption $u\in A_1$ and $v\in A_\infty$. It is easy to note that both conditions on the weights above imply it. This conjecture was recently proved in \cite{L-O-P}, where the authors apply the ``principal cubes'' decomposition with adequate modifications that avoid to use the $A_1$ condition on the weight $v$.
	
	Mixed weak estimates have also been explored for a more general class of maximal functions, such as the operator $M_\Phi$ associated to the Young function $\Phi$ and defined by
	\[M_\Phi f(x)=\sup_{Q \ni x}\|f\|_{\Phi,Q},\]
	where the supremum is taken over averages of Luxemburg type (see section below for details).
	For instance, in \cite{Berra-Carena-Pradolini(M)} it was proved that if $\Phi(t)=t^r(1+\log^+t)^\delta$, with $r\geq 1$ and $\delta\geq 0$, $u\geq 0$, $v=|x|^{\beta}$ with $\beta<-n$ and $w=1/\Phi(1/v)$  then the estimate
	\[uw\left(\left\{x\in \mathbb{R}^n: \frac{M_\Phi(fv)(x)}{v(x)}>t\right\}\right)\leq C\int_{\mathbb{R}^n}\Phi\left(\frac{|f(x)|v(x)}{t}\right)Mu(x)\,dx\]
	holds for every positive $t$.  
	
	The same estimate is true if we consider this family of Young functions and two weights $u$ and $v$ such that $u,v^r\in A_1$. This result is contained in \cite{Berra}, and generalizes the corresponding estimate given in \cite{CruzUribe-Martell-Perez} for the case $\Phi(t)=t$. However, this inequality turns out to be non-homogeneous in the weight $v$. This problem was overcome in \cite{Berra-Carena-Pradolini(MN)}, where the authors proved that under the same condition on the weights and even for a more general class of Young functions $\Phi$ the inequality
	\begin{equation}\label{eq: intro - desigualdad mixta BCP (MN)}
	uv^r\left(\left\{x\in \mathbb{R}^n: \frac{M_\Phi(fv)(x)}{v(x)}>t\right\}\right)\leq C\int_{\mathbb{R}^n}\Phi\left(\frac{|f(x)|}{t}\right)u(x)v^r(x)\,dx
	\end{equation}
	holds for every positive $t$. In the same way that inequality \eqref{eq: intro - desigualdad mixta Sawyer} allows to obtain a different proof of the boundedness of $M$, the estimate above can be used to give an alternative proof of the boundedness of $M_\Phi$, when $\Phi$ is a Young function of $L\textrm{log}L$ type. 
	
	By virtue of the extension proved in \cite{L-O-P} and of the discussed results above, a natural interrogant arises: does inequality \eqref{eq: intro - desigualdad mixta BCP (MN)} hold in the more general context $u\in A_1$ and $v^r\in A_\infty$? This fact is an improvement of a conjecture established in \cite{Berra}, which remained open until now.
	
	\medskip
	
	In this paper we answer the question positively. We will be dealing with a family of Young functions with certain properties, as follows. Given $r\geq 1$ we say that a Young function belongs to the family $\mathfrak{F}_r$ if $\Phi$ is submultiplicative, has lower type $r$ and satisfies the condition
	\[\frac{\Phi(t)}{t^r}\leq C_0 (\log t)^\delta,\quad \textrm{ for }t\geq t^*\]
	for some constants $C_0>0$, $\delta\geq 0$ and $t^*\geq 1$. There is, however, an important modification in the involved operator because of the $A_\infty$ condition of $v$. Notice that when $v\in A_1$ the operator $S$ defined by Sawyer in \cite{Sawyer} is equivalent to 
	\[\mathcal{T} f(x)=\frac{M(fv)(x)}{M v(x)}.\]
	For the case of $M_\Phi$, it can be proven that $M_\Phi v\approx v$ when $v^r\in A_1$. Therefore inequality \eqref{eq: intro - desigualdad mixta BCP (MN)} can be rewritten as follows
	
	\begin{equation}\label{eq - intro - desigualdad mixta BCP(MN) alternativa}
	uv^r\left(\left\{x\in \mathbb{R}^n: \frac{M_\Phi(fv)(x)}{M_\Phi v(x)}>t\right\}\right)\leq C\int_{\mathbb{R}^n}\Phi\left(\frac{|f(x)|}{t}\right)u(x)v^r(x)\,dx.
	\end{equation}
	
	It is not difficult to see that the operator $S f$ defined by Sawyer in \cite{Sawyer} is bounded in $L^\infty(uv)$ under the assumptions on these weights. The same statement is true if we consider the operator $S_\Phi=M_\Phi(fv)/v$ that appears in \eqref{eq: intro - desigualdad mixta BCP (MN)} and the space $L^\infty(uv^r)$, but this is not clear when we have $v^r\in A_\infty$. However, if we modify the operator by considering the variant 
	\[\mathcal{T}_\Phi f(x)=\frac{M_\Phi(fv)(x)}{M_\Phi v(x)}\]
	we have an alternative operator that is bounded in $L^\infty(uv^r)$ when we only assume $v^r\in A_\infty$. This new operator seems to be a well extension for the case $v\in A_\infty$ since it is continuous in $L^\infty(uv^r)$, a property that will be useful later. Concretely, our main result is the following.
	
	\begin{teo}\label{teo: teorema principal}
		Let $r\geq 1$ and $\Phi\in \mathfrak{F}_r$. If $u\in A_1$ and $v^r\in A_\infty$ then there exists a positive constant $C$ such that the inequality
		\[uv^r\left(\left\{x\in \mathbb{R}^n: \frac{M_\Phi(fv)(x)}{M_\Phi v(x)}>t
		\right\}\right)\leq C\int_{\mathbb{R}^n}\Phi\left(\frac{|f(x)|}{t}\right)u(x)v^r(x)\,dx,\]
		holds for every positive $t$.
	\end{teo}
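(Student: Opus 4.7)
The approach will be to adapt the principal-cubes argument of Li, Ombrosi and P\'erez \cite{L-O-P} to the Orlicz setting, bridging the gap between the $A_1$ result \eqref{eq - intro - desigualdad mixta BCP(MN) alternativa} and the desired $A_\infty$ statement.

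First I would reduce the problem. By positive $1$-homogeneity of the Luxemburg norm one has $M_\Phi(\lambda g)=\lambda M_\Phi g$ for $\lambda>0$, so replacing $f$ by $f/t$ reduces matters to $t=1$. Via the three-lattice trick $M_\Phi \lesssim \sum_{\alpha=1}^{3^n} M_\Phi^{\mathcal{D}^\alpha}$ for the standard family of shifted dyadic grids, and since $M_\Phi^{\mathcal{D}^\alpha} v \le M_\Phi v$ always, on the level set at least one $\alpha$ forces $M_\Phi^{\mathcal{D}^\alpha}(fv)(x) > 3^{-n} M_\Phi^{\mathcal{D}^\alpha} v(x)$; the small constant that appears can be absorbed on the right-hand side using the submultiplicativity of $\Phi$. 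One then works with a fixed dyadic grid $\mathcal{D}$ and selects the maximal dyadic cubes $\{Q_j\}$ satisfying $\|fv\|_{\Phi,Q_j} > c\|v\|_{\Phi,Q_j}$, which are pairwise disjoint and cover the dyadic level set, reducing the theorem to the estimate $\sum_j uv^r(Q_j) \le C\int \Phi(|f|)\,uv^r$.

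The heart of the argument is a principal-cubes decomposition inside each $Q_j$. I would stop, iteratively, at the maximal subcubes $Q'\subset Q$ where a suitable quantity, for instance a $v^r$-weighted $\Phi$-average of $|f|$, doubles with respect to its value on $Q$. Together with the base sets $E(Q) = Q \setminus \bigcup_{Q'\in\mathcal{P}(Q)} Q'$ this produces a sparse tree. The condition $\|fv\|_{\Phi,Q} > c\|v\|_{\Phi,Q}$, combined with the submultiplicativity of $\Phi$ and the normalisation $\frac{1}{|Q|}\int_Q \Phi(v/\|v\|_{\Phi,Q})=1$, yields an estimate of the form
\[
1 \;\le\; \frac{C}{|Q|}\int_Q \Phi(|f|)\,\Phi\!\left(\frac{v}{\|v\|_{\Phi,Q}}\right)\,dx.
\]
The $\mathfrak{F}_r$ structure $\Phi(t)\lesssim t^r(\log t)^\delta$ replaces the second factor by $(v/\|v\|_{\Phi,Q})^r$ up to a logarithmic correction, which is absorbed by the reverse H\"older inequality available because $v^r\in A_\infty$. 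The $A_1$ hypothesis on $u$ converts $|Q|$ into a $uv^r$-quantity via $u(Q)\le C|Q|\operatorname*{ess\,inf}_Q u$, and iterating through the principal tree, where each generation loses a definite fraction of $uv^r$-mass (again from $v^r\in A_\infty$), produces a geometric sum bounded by $\sum_{Q\in\mathcal{P}} \int_{E(Q)}\Phi(|f|)\,uv^r \le \int \Phi(|f|)\,uv^r$.

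The main obstacle is the precise choice of stopping quantity, so that its doubling simultaneously (i) controls the Orlicz condition $\|fv\|_{\Phi,Q}>c\|v\|_{\Phi,Q}$ driving the level set and (ii) produces geometric decay of $uv^r$-mass from the $A_\infty$ hypothesis alone. In the $A_1$ case of \cite{Berra-Carena-Pradolini(MN)} one can replace $\|v\|_{\Phi,Q}$ by the pointwise value $v(x)$ and bypass the logarithmic $(\log t)^\delta$ term built into $\mathfrak{F}_r$; under $A_\infty$ this term must be tamed via the self-improvement of the reverse H\"older exponent of $v^r$, which is the technical core of the proof.
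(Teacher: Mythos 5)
Your sketch has the right general ingredients (principal cubes, submultiplicativity and lower type of $\Phi$, reverse H\"older for $v^r$ to tame the logarithm), but the central reduction is a genuine gap. You claim that, after passing to a dyadic grid, it suffices to take the maximal dyadic cubes $Q_j$ with $\|fv\|_{\Phi,Q_j}>c\|v\|_{\Phi,Q_j}$ and prove $\sum_j uv^r(Q_j)\leq C\int\Phi(|f|)\,uv^r$. No argument is given for this local ``testing'' bound, and none is available under the hypotheses: when only $u\in A_1$ and $v^r\in A_\infty$ are assumed, the product $uv^r$ need not be in $A_\infty$ (this is exactly why the Sawyer--Cruz-Uribe--Martell--P\'erez conjecture was hard, as recalled in the introduction), so the condition $\|fv\|_{\Phi,Q_j}>c\|v\|_{\Phi,Q_j}$, which only controls Luxemburg averages on $Q_j$, gives no control of the full mass $uv^r(Q_j)$. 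The paper never bounds $uv^r(Q_j^k)$ wholesale. Instead it proves the stronger estimate \eqref{eq - intro - teorema principal, version mas fuerte} (with $v$, not $M_\Phi v$, in the denominator) by a two-parameter stratification: the level sets $\Omega_k=\{M_{\Phi,\mathcal{D}}(fv)>a^k\}$ are decomposed into maximal cubes $Q_j^k$ with $a^k<\|fv\|_{\Phi,Q_j^k}\leq 2^na^k$, and these are intersected with the sets $E_k=\{a^k<v\leq a^{k+1}\}$; only the quantities $a^{(k+1)r}u(E_k\cap Q_j^k)$ are estimated. The summation over the interaction parameter $\ell$ (comparing the $v^r$-average of $Q_j^k$ with $a^{(k+\ell)r}$) is made possible by the exponential decay $u(E_k\cap Q_j^k)\leq c_1e^{-c_2\ell r}u(Q_j^k)$ of Lemma~\ref{lema: acotacion de u(Ek cap Q_j^k)}, borrowed from \cite{L-O-P}. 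This two-parameter structure, which your single stopping-time tree does not see, is the engine of the proof and cannot be bypassed by the disjointness of maximal cubes.

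Two further points where your outline diverges from what is actually needed. First, the cubes with $v^r$-average below the level ($\ell=-1$) require a separate treatment: a second Calder\'on--Zygmund decomposition of $v^r\mathcal{X}_{Q_j^k}$ at height $a^{kr}$ and a principal-cube selection with the dilated threshold $a^{(k-t)\beta r}$, $0<\beta<\varepsilon$; your construction (stopping on a $v^r$-weighted $\Phi$-average of $|f|$) does not produce this, and the geometric decay you invoke per generation comes in the paper from Lebesgue-measure sparsity (Lemma~\ref{lema: sparsity de los cubos}) converted through the $A_\infty$ condition of $v^r$, combined with $u$-average doubling and $u\in A_1$ (Claims~\ref{af: control de h_1 por u} and \ref{af: control de h_2 por u}), not from $A_\infty$ alone. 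Second, your key display $1\leq \frac{C}{|Q|}\int_Q\Phi(|f|)\Phi(v/\|v\|_{\Phi,Q})$ is only the first step of the paper's Claims~\ref{af: control de a^kr por promedios de Phi(f), caso l no negativo} and \ref{af: control de a^kr por promedios de Phi(f), caso l=-1}: there the comparison is against the level $a^{kr}$ (not $\|v\|_{\Phi,Q}^r$), the cube is split according to $v\lessgtr t^*a^k$ to use the lower type $r$ and the $\mathfrak{F}_r$ growth separately, and the logarithmic factor is absorbed by a H\"older inequality with exponent $1+\varepsilon$ with respect to $v^r\,dx$, with $\varepsilon$ chosen in terms of $\ell$ and $[v^r]_{\mathrm{RH}_s}$ and then sent to $0$. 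Without the levels $a^k$ and the classes $\Lambda_{\ell,k}$ there is no quantity against which to run this absorption, so the ``technical core'' you correctly identify cannot be executed in your framework as described.
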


	In fact, we can prove the following stronger version of the inequality above:
	\begin{equation}\label{eq - intro - teorema principal, version mas fuerte}
uv^r\left(\left\{x\in \mathbb{R}^n: \frac{M_\Phi(fv)(x)}{v(x)}>t
\right\}\right)\leq C\int_{\mathbb{R}^n}\Phi\left(\frac{|f(x)|}{t}\right)u(x)v^r(x)\,dx,
	\end{equation}
	which directly implies Theorem~\ref{teo: teorema principal} since $v(x)\lesssim M_\Phi v(x)$. The key for the proof of this inequality is to combine some ideas that appear in \cite{L-O-P} with a subtle H\"{o}lder inequality that allows to split the expression $\Phi(|f|v)$ into $\Phi(|f|)v^r$. The proof also follows the ``principal cubes'' decomposition made by Sawyer in \cite{Sawyer}.
	
	Recall that we are considering the modified operator $\mathcal{T}_\Phi$, which coincides with $S_\Phi$ when $v^r$ belongs to $A_1$. The advantage of dealing with it is that we can give a version of Theorem~\ref{teo: teorema principal} where we weaken a bit the assumption on $\Phi$. That is, if $\Psi$ is an arbitrary Young function that behaves like one in the family $\mathfrak{F}_r$ but only for large $t$, we have the following result.
	
	\begin{coro}\label{coro: corolario del teorema principal}
		Let $r\geq 1$, $\Phi\in \mathfrak{F}_r$, $u\in A_1$ and $v^r\in A_\infty$. Let $\Psi$ be a Young function that verifies $\Psi(t) \approx \Phi(t)$, for every $t\geq t^*\geq 0$. Then, there exist two positive constants $C_1$ and $C_2$ such that the inequality
		\[uv^r\left(\left\{x\in \mathbb{R}^n: \frac{M_\Psi(fv)(x)}{M_\Psi v(x)}>t
		\right\}\right)\leq C_1\int_{\mathbb{R}^n}\Psi\left(\frac{C_2|f|}{t}\right)uv^r\]
		holds for every $t>0$. 
	\end{coro}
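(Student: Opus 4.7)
The plan is to reduce the corollary to Theorem~\ref{teo: teorema principal} applied to $\Phi$, via three ingredients: (i) a pointwise equivalence $M_\Psi f(x)\approx M_\Phi f(x)$, (ii) a standard truncation of $f$ exploiting the $L^\infty$-boundedness of $\mathcal{T}_\Phi$ on bounded functions, and (iii) the use of $\Psi\approx\Phi$ on $[t^*,\infty)$ to convert the resulting integrand from $\Phi$ back to $\Psi$.

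First I would establish that $M_\Psi f(x)\approx M_\Phi f(x)$ for every measurable $f$. Since $\Psi\approx\Phi$ on $[t^*,\infty)$ and both are nondecreasing with $\Psi(0)=\Phi(0)=0$, there exists $A>0$ such that
\[
\Phi(s)\leq A\Psi(s)+\Phi(t^*)\qquad\text{and}\qquad\Psi(s)\leq A\Phi(s)+\Psi(t^*)\qquad\text{for every } s\geq 0.
\]
Given a cube $Q$ and $\lambda=\|f\|_{\Psi,Q}$ we have $|Q|^{-1}\int_Q\Psi(|f|/\lambda)\,dx\leq 1$; substituting $s=|f(x)|/\lambda$ into the first inequality and averaging yields $|Q|^{-1}\int_Q\Phi(|f|/\lambda)\,dx\leq A+\Phi(t^*)=:M$. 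By convexity of $\Phi$ with $\Phi(0)=0$, $\Phi(s/M')\leq\Phi(s)/M'$ for $M':=\max(M,1)\geq 1$, hence $|Q|^{-1}\int_Q\Phi(|f|/(M'\lambda))\,dx\leq 1$ and $\|f\|_{\Phi,Q}\leq M'\|f\|_{\Psi,Q}$. The reverse inequality follows by symmetry, so $M_\Phi f\approx M_\Psi f$ pointwise. Consequently there exists $K>0$ with $\mathcal{T}_\Psi f\leq K\mathcal{T}_\Phi f$, so
\[
\{\mathcal{T}_\Psi f>t\}\subset\{\mathcal{T}_\Phi f>t/K\}.
\]

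Next I would truncate. Fix $\alpha\in(0,1)$ (to be chosen below) and set $f_1=f\chi_{\{|f|>\alpha t/K\}}$, $f_2=f-f_1$. Since $|f_2|\leq\alpha t/K$ and $\|cv\|_{\Phi,Q}=c\|v\|_{\Phi,Q}$, we get $M_\Phi(f_2v)\leq(\alpha t/K)M_\Phi v$, and so $\mathcal{T}_\Phi f_2\leq\alpha t/K$; in particular $\{\mathcal{T}_\Phi f_2>\alpha t/K\}=\emptyset$. By sublinearity of $M_\Phi$ (triangle inequality for the Luxemburg norm),
\[
\{\mathcal{T}_\Phi f>t/K\}\subset\{\mathcal{T}_\Phi f_1>(1-\alpha)t/K\},
\]
and applying Theorem~\ref{teo: teorema principal} to $f_1$ at level $(1-\alpha)t/K$,
\[
uv^r\bigl(\{\mathcal{T}_\Phi f_1>(1-\alpha)t/K\}\bigr)\leq C\int_{\{|f|>\alpha t/K\}}\Phi\!\left(\frac{K}{1-\alpha}\,\frac{|f(x)|}{t}\right)u(x)v^r(x)\,dx.
\]
Finally I would choose $\alpha\in(0,1)$ so that $\alpha/(1-\alpha)\geq t^*$. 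Then on $\{|f|>\alpha t/K\}$ one has $(K/(1-\alpha))|f|/t>\alpha/(1-\alpha)\geq t^*$, and the equivalence $\Psi\approx\Phi$ on $[t^*,\infty)$ gives $\Phi(\cdot)\leq C'\Psi(\cdot)$ there. Extending the integration to all of $\mathbb{R}^n$ and setting $C_2=K/(1-\alpha)$, $C_1=CC'$ yields the desired estimate.

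The main obstacle is the pointwise equivalence $M_\Psi\approx M_\Phi$, which does not follow directly from $\Psi\approx\Phi$ on $[t^*,\infty)$ since these functions may differ arbitrarily on $[0,t^*)$. The key trick is to dominate $\Phi$ globally by $A\Psi+\Phi(t^*)$ using monotonicity, and then absorb the additive constant into a rescaling of the Luxemburg parameter via convexity. Everything else is bookkeeping and a careful choice of $\alpha$.
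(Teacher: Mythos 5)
Your argument is correct, and its skeleton is the same as the paper's: establish $M_\Psi\approx M_\Phi$ from the equivalence of the Young functions for large arguments (your rescaling argument is essentially the paper's Proposition~\ref{propo: normas de funciones equivalentes para t grande son equivalentes}, which you could simply have cited), pass from $\mathcal{T}_\Psi$ to $\mathcal{T}_\Phi$, apply Theorem~\ref{teo: teorema principal}, and then use the smallness of $\mathcal{T}_\Phi$ on bounded data to see that only the region where the argument of $\Phi$ exceeds $t^*$ matters, so that $\Phi$ can be traded for $\Psi$. Where you genuinely deviate is in the mechanism for that last reduction: the paper first proves the weak modular estimate for $\mathcal{T}_\Psi$ with $\Phi$ on the right-hand side, then invokes the $L^\infty(uv^r)$ bound $\|\mathcal{T}_\Psi f\|_{L^\infty(uv^r)}\leq\|f\|_{L^\infty(uv^r)}$ together with Lemma~\ref{lema - implicacion de tipo debil para interpolacion modular} to restrict the integral to $\{|f|>t/2\}$, and finally uses the submultiplicativity of $\Phi$ to split $\Phi(2c_1|f|/t)\leq\Phi(c_1/t^*)\Phi(2t^*|f|/t)$ before converting to $\Psi$; you instead truncate $f=f_1+f_2$ at height $\alpha t/K$ \emph{before} applying Theorem~\ref{teo: teorema principal}, using the trivial bound $\mathcal{T}_\Phi f_2\leq \alpha t/K$ and sublinearity, so that the theorem is applied to $f_1$ only and the argument of $\Phi$ is automatically above $t^*$ on its support. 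Your route is more self-contained (no appeal to the external lemma from \cite{Berra-Carena-Pradolini(MN)}) and does not use submultiplicativity of $\Phi$ at this stage; the paper's route packages the truncation idea into a reusable lemma, which is convenient when the same reduction is needed for other operators. The only minor caveat in your write-up is the choice of $\alpha$: you need $\alpha/(1-\alpha)\geq t^*$ with $\alpha\in(0,1)$, which is always possible but forces $C_2=K/(1-\alpha)$ to grow with $t^*$; this is harmless since the corollary allows an arbitrary constant inside $\Psi$.
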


Corollary~\ref{coro: corolario del teorema principal} will play a fundamental role to obtain mixed inequalities for a fractional version of the operators considered above. That is, as an important application of these results we can give mixed estimates for the \emph{generalized fractional maximal operator} $M_{\gamma,\Phi}$,   defined by
\[M_{\gamma,\Phi}f(x)=\sup_{Q\ni x}|Q|^{\gamma/n}\|f\|_{\Phi,Q},\]
where $0<\gamma<n$ and $\Phi$ is a Young function.

When we consider the family of functions $\Phi(t)=t^r(1+\log^+t)^\delta$, for $r\geq 1$ and $\delta\geq 0$, the operator $M_{\gamma,\Phi}$ is bounded from $L^p(w^p)$ to $L^q(w^q)$ if and only if $w^r\in A_{p/r,q/r}$, for $0<\gamma<n/r$, $r<p<n/\gamma$ and $1/q=1/p-\gamma/n$. This result was set and proved in \cite{B-D-P}, and generalizes the strong $(p,q)$ type of $M_\gamma$ between Lebesgue spaces when we consider $r=1$ and $\delta=0$. As it occurs with $M_\gamma$, this estimate fails in the limit case $p=r$. In \cite{G-P-S-Sinica} it was proved an endpoint weak type estimate for this operator in the setting of spaces of homogeneous type. The corresponding inequality for the euclidean case is
\begin{equation}\label{eq - intro - tipo debil modular de M_{gamma, Phi}}
w\left(\left\{x\in \mathbb{R}^n: M_{\gamma,\Phi}f(x)>t\right\}\right)\leq C\varphi\left(\int_{\mathbb{R}^n}\Phi\left(\frac{|f(x)|}{t}\right)\Psi(M w(x))\,dx\right),
\end{equation}
where $\Psi(t)=t^{n-r\gamma}(1+\log^+t^{-r\gamma/n})^{\delta}$ and $\varphi(t)=(t(1+\log^+t^{r\gamma/n})^\delta)^{n/(n-r\gamma)}$.

In \cite{FB} we study mixed inequalities for this operator when a power of the weight $v$ belongs to $A_1$. These inequalities arose from the fact that those mixed inequalities allow to obtain an alternative proof of the continuity properties of $M_{\gamma, \Phi}$ discussed above (see Section 4.3 in \cite{FB} for further details). The proof relies on a pointwise estimate that relates the operators $M_{\gamma,\Phi}$ and $M_\Phi$ (see section below) and it is a generalization of a Hedberg type inequality used in \cite{Berra-Carena-Pradolini(J)} to give mixed estimates for the fractional maximal operator $M_\gamma$, when $0<\gamma<n$.

The following two theorems contain mixed inequalities for $M_{\gamma,\Phi}$ for the cases $r<p<n/\gamma$ and the limit case $p=r$, respectively. In the particular case in which the power of $v$ belongs to $A_1$, both results were established and proved in \cite{FB}. 

\begin{teo}	\label{teo: mixta para M_{gamma,Phi}, caso r<p<n/gamma}
	Let $\Phi(t)=t^r(1+\log^+ t)^\delta$, with $r\geq 1$ and $\delta \geq 0$. Let $0<\gamma<n/r$, $r<p<n/\gamma$ and $1/q=1/p-\gamma/n$. If $u\in A_1$ and $v^{q(1/p+1/r')}\in A_\infty$, then we have that
	\[uv^{q(1/p+1/r')}\left(\left\{x\in \mathbb{R}^n: \frac{M_{\gamma,\Phi}(fv)(x)}{M_\eta v(x)}>t\right\}\right)^{1/q}\leq C\left[ \int_{\mathbb{R}^n}\left(\frac{|f(x)|}{t}\right)^pu^{p/q}(x)(v(x))^{1+p/r'}\,dx\right]^{1/p},\]
	where $\eta(t)=t^{q/p+q/r'}(1+\log^+ t)^{n\delta/(n-r\gamma)}$.
\end{teo}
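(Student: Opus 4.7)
The strategy follows the approach in \cite{FB}, where the analogous inequality was proved under the stronger assumption that a power of $v$ belongs to $A_1$. The main ingredient is a Hedberg-type pointwise estimate relating $M_{\gamma,\Phi}$ and $M_{\Phi}$, in the spirit of \cite{Berra-Carena-Pradolini(J)}. The key improvement here is that wherever the argument in \cite{FB} invokes the mixed inequality \eqref{eq: intro - desigualdad mixta BCP (MN)} for the non-fractional operator, we substitute Corollary~\ref{coro: corolario del teorema principal}, which allows us to weaken the hypothesis on $v$ from $A_1$ to $A_\infty$.

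Concretely, the first step is to establish the Hedberg-type pointwise inequality
\[
M_{\gamma,\Phi}(fv)(x)\;\leq\; C\,\bigl(M_{\Phi}(fv)(x)\bigr)^{p/q}\,\bigl(M_{\eta}v(x)\bigr)^{1-p/q},
\]
with $\eta(t)=t^{q/p+q/r'}(1+\log^+t)^{n\delta/(n-r\gamma)}$ as in the statement. The lower type $q/p+q/r'$ and the logarithmic exponent $n\delta/(n-r\gamma)$ arise from the Hedberg optimization over scales, balancing the $L^{r}\log^{\delta}L$ structure of $\Phi$ against the scaling $|Q|^{\gamma/n}$ and the target exponents $p,q$. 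Dividing by $M_\eta v(x)$ and using that, under the $A_\infty$ hypothesis on $v^{q(1/p+1/r')}$, one has $M_\eta v(x)^{q/p}\gtrsim M_\Psi v(x)$ for a Young function $\Psi$ of compatible growth in $\mathfrak{F}_{\,q(1/p+1/r')}$, the level set $\{M_{\gamma,\Phi}(fv)/M_\eta v>t\}$ is contained in $\{M_\Phi(fv)/M_\Psi v>c\,t^{q/p}\}$.

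The second step is to apply Corollary~\ref{coro: corolario del teorema principal} to the latter level set, with weight pair $u\in A_1$ and $v^{q(1/p+1/r')}\in A_\infty$ (so that $q(1/p+1/r')$ plays the role of $r$ in the corollary), obtaining
\[
w\!\left(\left\{\frac{M_\Phi(fv)}{M_\Psi v}>c\,t^{q/p}\right\}\right)\;\leq\; C_1\int_{\mathbb{R}^n}\Psi\!\left(\frac{C_2|f(x)|}{t^{q/p}}\right)w(x)\,dx,
\]
where $w=uv^{q(1/p+1/r')}$. The identity $(p/q)\cdot q(1/p+1/r')=1+p/r'$ gives $w^{p/q}=u^{p/q}v^{1+p/r'}$. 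Combining the explicit form of $\Psi$ with the subtle H\"older inequality used in the proof of \eqref{eq - intro - teorema principal, version mas fuerte} (which splits the power from the logarithmic factor), a standard rescaling converts the $\Psi$-integral on the right into the weighted $L^{p}$-integral $\int(|f|/t)^{p}u^{p/q}v^{1+p/r'}$. Raising the resulting inequality to the $1/q$-th power yields the claim.

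The principal obstacle is Step~1: the Hedberg-type inequality must be calibrated so that the exponents of both the power and the logarithm in $\eta$ close the subsequent reduction exactly; any deviation spoils the compatibility with Corollary~\ref{coro: corolario del teorema principal}. A secondary technicality is verifying $M_\eta v(x)^{q/p}\gtrsim M_\Psi v(x)$ under only $v^{q(1/p+1/r')}\in A_\infty$, which relies on self-improvement properties of $A_\infty$ weights together with the comparability of Young-function maximal operators of a given weight in this class.
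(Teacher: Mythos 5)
There is a genuine gap, and it sits precisely at your Step 1. The pointwise bound $M_{\gamma,\Phi}(fv)(x)\leq C\,(M_\Phi(fv)(x))^{p/q}(M_\eta v(x))^{1-p/q}$ cannot hold: replacing $f$ by $\lambda f$ scales the left side by $\lambda$ and the right side by $\lambda^{p/q}$ with $p/q<1$, so the inequality fails for every nonzero $f$ as $\lambda\to\infty$ (equivalently, take $v\equiv 1$, where $M_\eta v\approx 1$ and the claim would read $M_{\gamma,\Phi}f\lesssim (M_\Phi f)^{p/q}$). A Hedberg-type estimate must carry a \emph{global} normalizing factor; this is exactly what Proposition~\ref{propo: estimacion puntual M_{gamma, Phi}} provides, namely $M_{\gamma,\Phi}(f_0/w)(x)\leq C\,M_\xi(f_0^{p/q}/w)(x)\,(\int f_0^p)^{\gamma/n}$ for an auxiliary Young function $\xi$ with $t^{\gamma/n}\xi^{-1}(t)\lesssim\Phi^{-1}(t)$, and this global factor $(\int f_0^p)^{\gamma/n}$ is indispensable later: it is what produces the exponent $q/p=1+q\gamma/n$ on the right-hand side. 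Because your Step 1 is false, the claimed inclusion of level sets and the subsequent "standard rescaling" that is supposed to turn a $\Psi$-modular into $\int(|f|/t)^p u^{p/q}v^{1+p/r'}$ have no support.

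The second problem is how you invoke Corollary~\ref{coro: corolario del teorema principal}. The corollary controls $M_\Psi(fv)/M_\Psi v$ with the \emph{same} Young function in numerator and denominator, where $\Psi$ is equivalent for large $t$ to a function in $\mathfrak{F}_r$ and $v^r\in A_\infty$; you apply it to the hybrid quotient $M_\Phi(fv)/M_\Psi v$ with $\Phi\in\mathfrak{F}_r$ but with $q(1/p+1/r')$ playing the role of $r$, which is not licensed by the corollary, and the auxiliary comparison $M_\eta v^{\,q/p}\gtrsim M_\Psi v$ is asserted without proof. The paper's route avoids all of this by calibrating a single auxiliary function $\xi$ (equal to $t^{q/\beta}$ for $t\leq1$ and $t^\sigma(1+\log^+t)^\nu$ for $t>1$, with $\sigma=nr/(n-r\gamma)$, $\nu=n\delta/(n-r\gamma)$, $\beta=\tfrac{q}{\sigma}(\tfrac1p+\tfrac1{r'})>1$): Lemma~\ref{lema: Jensen para promedios de tipo Luxemburgo} raises the Hedberg estimate to the power $\beta$, the comparison $(M_\xi v^\beta)^{1/\beta}\lesssim M_\eta v$ is checked directly from $\xi(z^\beta)\lesssim\eta(z)$, and Corollary~\ref{coro: corolario del teorema principal} is then applied to $M_\xi$ itself with weight $v^\beta$, using $\xi\in\mathfrak{F}_\sigma$ (up to equivalence for large $t$) and $(v^\beta)^\sigma=v^{q(1/p+1/r')}\in A_\infty$; the final passage to the $L^p$ bound uses the definition of $\xi$ on $[0,1]$, its upper type $q/\beta$, and the specific choice $w=u^{1/q}v^{1/p+1/r'-1}$. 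Your outline gestures at the right ingredients (Hedberg plus the new corollary), but without the corrected pointwise estimate, the $\beta$-power/Jensen step, and a legitimate application of the corollary, the argument does not close.
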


\begin{teo}\label{teo: mixta para M_{gamma,Phi}, caso p=r}
	Let $\Phi(t)=t^r(1+\log^+t)^\delta$, with $r\geq 1$ and $\delta\geq 0$. Let $0<\gamma<n/r$ and $1/q=1/r-\gamma/n$. If $u\in A_1$ and $v^q\in A_\infty$, then there exists a positive constant $C$ such that
	\[uv^q\left(\left\{x\in \mathbb{R}^n: \frac{M_{\gamma,\Phi}(fv)(x)}{M_\eta v(x)}>t\right\}\right)\leq \varphi\left(\int_{\mathbb{R}^n}\Phi_\gamma\left(\frac{|f(x)|}{t}\right)\Psi\left(u^{1/q}(x)v(x)\right)\,dx\right),\]
	where $\eta(t)=t^q(1+\log^+t)^{n\delta/(n-r\gamma)}$, $\varphi(t)=[t(1+\log^+t)^\delta ]^{q/r}$, $\Psi(t)=t^r(1+\log^+(t^{1-q/r}))^{n\delta/(n-r\gamma)}$ and $\Phi_\gamma(t)=\Phi(t)(1+\log^+t)^{\delta r\gamma/(n-r\gamma)}$. 
\end{teo}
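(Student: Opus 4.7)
The strategy is to reduce the estimate to Corollary~\ref{coro: corolario del teorema principal} via a Hedberg-type pointwise inequality, adapting to the $A_\infty$ setting the argument used in \cite{FB} for the $A_1$ case. The proof proceeds in three stages: a pointwise reduction from $M_{\gamma,\Phi}$ to a generalized maximal operator of Orlicz type $M_\Theta$, an application of Corollary~\ref{coro: corolario del teorema principal} to this auxiliary operator, and a final rearrangement of the resulting modular inequality.

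First, I would establish a pointwise estimate of the form
\[\frac{M_{\gamma,\Phi}(fv)(x)}{M_\eta v(x)}\leq C\left(\frac{M_{\Theta}(fv)(x)}{M_{\Theta}v(x)}\right)^{r/q}\]
for almost every $x\in\mathbb{R}^n$, where $\Theta$ is an auxiliary Young function equivalent for large $t$ to a member of $\mathfrak{F}_r$. The specific choice $\eta(t)=t^q(1+\log^+ t)^{n\delta/(n-r\gamma)}$ is dictated by how one splits the factor $|Q|^{\gamma/n}\|fv\|_{\Phi,Q}$: one uses that $v^q\in A_\infty$ (so that for $x\in Q$ the quantity $|Q|^{\gamma/n}$ is controlled by a suitable power of an average of $v$ weighted by an Orlicz norm), together with the generalized Hölder inequality in Orlicz spaces and submultiplicativity, to isolate a factor bounded by $(M_\eta v(x))^{1-r/q}$ and absorb the remaining piece into $\|fv\|_{\Theta,Q}^{r/q}$. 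The endpoint $p=r$ is precisely where this splitting requires the additional logarithmic correction in $\Phi_\gamma$ with respect to $\Phi$.

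Second, the pointwise estimate yields the inclusion of level sets
\[\left\{x:\frac{M_{\gamma,\Phi}(fv)(x)}{M_\eta v(x)}>t\right\}\subseteq\left\{x:\frac{M_{\Theta}(fv)(x)}{M_{\Theta}v(x)}>(t/C)^{q/r}\right\}.\]
Since $u\in A_1$ and $v^q\in A_\infty$, Corollary~\ref{coro: corolario del teorema principal} applied with $\Theta\in\mathfrak{F}_r$ bounds the $uv^q$-measure of the latter set by a constant times
\[\int_{\mathbb{R}^n}\Theta\!\left(\frac{C'|f(x)|}{(t/C)^{q/r}}\right)u(x)v^q(x)\,dx.\]

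Third, I would reshape this modular integral into the form appearing in the statement. The exponent $q/r$ hidden in $(t/C)^{q/r}$ produces, after factoring out via submultiplicativity, the outer function $\varphi(s)=[s(1+\log^+ s)^\delta]^{q/r}$; the identity $\Theta(|f|/t^{q/r})\approx \Phi_\gamma(|f|/t)^{r/q}(1+\log^+\cdot)^{\cdots}$ combined with Young's inequality in Orlicz spaces then lets one absorb the $uv^q$-density into the product $\Phi_\gamma(|f|/t)\Psi(u^{1/q}v)$, with $\Psi(t)=t^r(1+\log^+ t^{1-q/r})^{n\delta/(n-r\gamma)}$ dictated by matching the two sides of this Young splitting. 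The main obstacle is the first stage: locating the correct auxiliary $\Theta\in\mathfrak{F}_r$ and performing the Hedberg splitting at the endpoint $p=r$ in a way that produces the exact combination $\eta$, $\Phi_\gamma$, $\Psi$ and $\varphi$ prescribed by the statement. Once the pointwise estimate is calibrated, the rest of the proof is bookkeeping with Orlicz-norm inequalities.
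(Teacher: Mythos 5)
Your overall strategy (Hedberg-type reduction to an auxiliary Orlicz maximal operator, then the mixed inequality of Corollary~\ref{coro: corolario del teorema principal}, then Orlicz bookkeeping) is indeed the route the paper takes, but two of your key steps do not work as formulated. First, the pointwise estimate
\[\frac{M_{\gamma,\Phi}(fv)(x)}{M_\eta v(x)}\leq C\left(\frac{M_{\Theta}(fv)(x)}{M_{\Theta}v(x)}\right)^{r/q}\]
cannot hold with a constant independent of $f$: the left side is homogeneous of degree $1$ in $f$ while the right side is homogeneous of degree $r/q<1$, so replacing $f$ by $\lambda f$ and letting $\lambda\to\infty$ gives a contradiction. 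The correct Hedberg estimate (Proposition~\ref{propo: estimacion puntual M_{gamma, Phi}} with $p=r$) puts the power $r/q$ on the function inside the maximal operator and keeps a global normalizing factor: with $f_0=|f|wv$ one has $M_{\gamma,\Phi}(f_0/w)\leq C\,M_\xi\bigl(f_0^{r/q}/w\bigr)\bigl(\int f_0^r\bigr)^{\gamma/n}$, and the global factor is then absorbed by dividing the level $t$ by $\bigl(\int f_0^r\bigr)^{\gamma/n}$ rather than by raising a quotient of maximal functions to the power $r/q$.

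Second, your application of Corollary~\ref{coro: corolario del teorema principal} with $\Theta$ equivalent to a function in $\mathfrak{F}_r$ is inconsistent with both the hypothesis and the conclusion: at level $r$ the corollary requires $v^r\in A_\infty$ (which is not assumed here) and controls the $uv^r$-measure, whereas the theorem assumes $v^q\in A_\infty$ and bounds the $uv^q$-measure. Since $1/q=1/r-\gamma/n$ gives $q>r$, these are genuinely different classes. The paper resolves this by choosing the auxiliary function $\xi(t)=t^q(1+\log^+t)^{\delta q/r}$, which coincides with $\eta$ and lies in $\mathfrak{F}_q$, and then invoking the mixed estimate at level $q$ (so the hypotheses $u\in A_1$, $v^q\in A_\infty$ and the weight $uv^q$ match exactly); the function actually fed into that estimate is $f_0^{r/q}/(wv)=|f|^{r/q}(wv)^{r/q-1}$, and the final identities $\xi(t^{r/q})\leq\Phi_\gamma(t)$, $\Phi_\gamma(t^{1-q/r})t^q\leq\Psi(t)$ and $t\,\Phi_\gamma(t^{\gamma q/(nr)})\lesssim\varphi(t)$ (with $w=u^{1/q}$) produce the stated $\Phi_\gamma$, $\Psi$ and $\varphi$. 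Until you replace your $\mathfrak{F}_r$-based reduction by this level-$q$ one and restore the global factor in the Hedberg step, the argument has a genuine gap.
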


\begin{obs}
These two last theorems are very important since they give well extensions to many results concerning to mixed inequalities.
\begin{itemize}
	\item In \cite{Berra-Carena-Pradolini(J)} we use mixed inequalities for $M$ proved in \cite{CruzUribe-Martell-Perez} to give the corresponding estimates for the fractional maximal operator $M_\gamma$. A stronger version can be obtained if we use in the proof the mixed estimate for $M$ given in \cite{L-O-P}. However, this can only be done for the limit case $p=1$ and $q=n/(n-\gamma)$, since the proof for the remaining cases $1<p<n/\gamma$ depends heavily on an auxiliary lemma which uses $A_1$ condition of $v^{q/p}$. We can use Theorem~\ref{teo: mixta para M_{gamma,Phi}, caso r<p<n/gamma} to overcome this problem. Indeed, if we set $r=1$ and $\delta=0$ we have $\Phi(t)=t$. Therefore $M_{\gamma,\Phi}=M_\gamma$, and we obtain the corresponding extension of Theorem 1 in \cite{Berra-Carena-Pradolini(J)} for the case $v^{q/p}\in A_\infty$, for every $1<p<n/\gamma$. 
	
	\item If we assume $v^{q(1/p+1/r')}\in A_1$ then $M_\eta v \approx v$. In this case we precisely obtain Teorema~4.9 in \cite{FB}. If we furthermore set $r=1$ and $\delta=0$, then $M_\eta v = M_{q/p} v \approx v$. This recovers Theorem 1 in \cite{Berra-Carena-Pradolini(J)}, for the case $v^{q/p}\in A_1$. 
	\item By virtue of \eqref{eq - intro - teorema principal, version mas fuerte}, Theorem~\ref{teo: mixta para M_{gamma,Phi}, caso p=r} is still true when we replace $M_\eta v$ by $v$. This extends the limit case $p=1$ and $q=n/(n-\gamma)$ for $M_\gamma$ in \cite{Berra-Carena-Pradolini(J)} when we set $\Phi(t)=t$.
	\item If we assume $v^q\in A_1$ in Theorem~\ref{teo: mixta para M_{gamma,Phi}, caso p=r}, we get $M_\eta v\approx v$ and this recovers Teorema~4.11 in \cite{FB}.
	\item When we take $v=1$ in Theorem~\ref{teo: mixta para M_{gamma,Phi}, caso p=r} we obtain an estimate similar to \eqref{eq - intro - tipo debil modular de M_{gamma, Phi}}.
\end{itemize}
\end{obs}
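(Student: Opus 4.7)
The plan is to verify each of the five bullet points as a specialization argument built around two central technical facts: (i) for a Young function $\eta$ of lower type $s$ controlled above by $C\,t^s(\log t)^\alpha$ for large $t$, the equivalence $M_\eta v \approx v$ holds whenever $v^s\in A_1$ (this is the same principle mentioned just before Theorem~\ref{teo: teorema principal}, giving $M_\Phi v\approx v$ when $v^r\in A_1$); and (ii) the strengthened inequality \eqref{eq - intro - teorema principal, version mas fuerte}, which lets the denominator $M_\Phi v$ be replaced by $v$. Fact (i) follows because for $w=v^s\in A_1$ the Orlicz average $\|w\|_{L(\log L)^\alpha,Q}$ is comparable to the usual average $|Q|^{-1}\int_Q w$ (the $A_1$ self-improvement absorbing the logarithmic factor), whence $M_\eta v \lesssim (Mv^s)^{1/s}\lesssim v$, with the reverse bound trivial. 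With these in hand each bullet becomes either an explicit parameter substitution or a reduction via (i) or (ii).

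For the first bullet, substituting $r=1$ and $\delta=0$ into Theorem~\ref{teo: mixta para M_{gamma,Phi}, caso r<p<n/gamma} gives $r'=\infty$, $1/r'=0$, $\Phi(t)=t$, $M_{\gamma,\Phi}=M_\gamma$, and $\eta(t)=t^{q/p}$, so $M_\eta v=(M(v^{q/p}))^{p/q}$; the hypothesis $v^{q(1/p+1/r')}\in A_\infty$ collapses to $v^{q/p}\in A_\infty$ and the factor $u^{p/q}v^{1+p/r'}$ to $u^{p/q}v$, producing the $A_\infty$ extension of Theorem~1 of \cite{Berra-Carena-Pradolini(J)} for every $1<p<n/\gamma$. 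For the second bullet, fact (i) applied with $s=q(1/p+1/r')$ and $\alpha=n\delta/(n-r\gamma)$ gives $M_\eta v\approx v$; inserting this in Theorem~\ref{teo: mixta para M_{gamma,Phi}, caso r<p<n/gamma} recovers Teorema~4.9 of \cite{FB}, and further specializing $r=1$, $\delta=0$ reduces $\eta$ to $t^{q/p}$ and reproduces Theorem~1 of \cite{Berra-Carena-Pradolini(J)} in the $v^{q/p}\in A_1$ regime. For the fourth bullet, fact (i) is applied identically with $s=q$ to $\eta(t)=t^q(1+\log^+ t)^{n\delta/(n-r\gamma)}$, reducing Theorem~\ref{teo: mixta para M_{gamma,Phi}, caso p=r} to Teorema~4.11 of \cite{FB}.

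For the third bullet, I would revisit the proof of Theorem~\ref{teo: mixta para M_{gamma,Phi}, caso p=r}, which proceeds by a Hedberg-type pointwise estimate reducing the fractional problem to a weak modular inequality for $M_\Phi$, at which point Theorem~\ref{teo: teorema principal} is invoked; replacing that invocation by the stronger form \eqref{eq - intro - teorema principal, version mas fuerte} upgrades the conclusion by swapping $M_\eta v$ for $v$ on the left. The subsequent $\Phi(t)=t$ specialization then delivers the $p=1$ endpoint extension for $M_\gamma$ of \cite{Berra-Carena-Pradolini(J)}. For the fifth bullet, setting $v=1$ in Theorem~\ref{teo: mixta para M_{gamma,Phi}, caso p=r} reduces $M_\eta v$ to the constant $1/\eta^{-1}(1)$ and $uv^q$ to $u$, so the inequality becomes $u(\{M_{\gamma,\Phi}f>ct\})\leq \varphi\bigl(\int \Phi_\gamma(|f|/t)\Psi(u^{1/q})\,dx\bigr)$; comparing the outer $\varphi(t)=[t(1+\log^+ t)^\delta]^{q/r}$ with the $\varphi$ of \eqref{eq - intro - tipo debil modular de M_{gamma, Phi}} via $q/r=n/(n-r\gamma)$, and using $Mu\approx u$ (since $u\in A_1$) to compare $\Psi(u^{1/q})$ with $\Psi(Mu)$, exhibits the claimed structural similarity. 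The main obstacle I anticipate is the third bullet: substituting \eqref{eq - intro - teorema principal, version mas fuerte} inside the proof of Theorem~\ref{teo: mixta para M_{gamma,Phi}, caso p=r} demands checking that every intermediate inequality retains its form when the denominator $M_\Phi v$ is weakened to $v$; all other bullets follow once facts (i) and (ii) are in hand.
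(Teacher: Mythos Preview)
Your proposal is correct and matches the paper's approach: the remark carries no separate proof, and the justifications you supply are precisely the inline ones the paper gestures at (parameter substitution for bullets one and five, the equivalence $M_\eta v\approx v$ under the relevant $A_1$ hypothesis for bullets two and four, and invoking \eqref{eq - intro - teorema principal, version mas fuerte} in place of Theorem~\ref{teo: teorema principal} inside the proof of Theorem~\ref{teo: mixta para M_{gamma,Phi}, caso p=r} for bullet three). One small correction: in bullet three the Hedberg reduction in the proof of Theorem~\ref{teo: mixta para M_{gamma,Phi}, caso p=r} lands on $M_\xi$ with $\xi=\eta\in\mathfrak{F}_q$, not on $M_\Phi$, so it is the strengthened inequality \eqref{eq - intro - teorema principal, version mas fuerte} applied to $M_\xi$ (with exponent $q$) that replaces $M_\xi v=M_\eta v$ by $v$; your worry about intermediate steps is unfounded since the only change is the denominator in the level-set inclusion before the modular estimate is applied.
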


The remainder of this paper is organized as follows: in \S~\ref{section: preliminares} we give the required preliminaries and basic definitions. \S~\ref{section: resultados auxiliares} contains some auxiliary results that will be useful in the main proofs. In \S~\ref{section: prueba principal} we prove both Theorem~\ref{teo: teorema principal} and Corollary~\ref{coro: corolario del teorema principal}. Finally, we prove Theorem~\ref{teo: mixta para M_{gamma,Phi}, caso r<p<n/gamma} and Theorem~\ref{teo: mixta para M_{gamma,Phi}, caso p=r} in \S~\ref{section: aplicaciones}, as an application of the main result.

\section{Preliminaries and definitions}\label{section: preliminares}

We shall say that $A\lesssim B$ if there exists a positive constant $C$ such that $A\leq C B$. The constant $C$ may change on each occurrence. If we have $A\lesssim B$ and $B\lesssim A$, this will be denoted as $A\approx B$.

Given a function $\varphi$, we will say that $f\in L^{\varphi}_{loc}$ if $\varphi(|f|)$ is locally integrable. In the case $\varphi(t)=t$, the corresponding space is the usual $L^1_{loc}$.

By a \emph{weight} $w$ we understand a function that is locally integrable, positive and finite in almost every $x$. Given $1<p<\infty$, the $A_p$-Muckenhoupt class is defined to be the set of weights $w$ that verify 
\[\left(\frac{1}{|Q|}\int_Qw\right)\left(\frac{1}{|Q|}\int_Qw^{1-p'}\right)^{p-1}\leq C,\]
for some positive constant $C$ and for every cube $Q\subseteq \mathbb{R}^n$. We shall consider cubes in $\mathbb{R}^n$ with sides parallel to the coordinate axes.
In the limit case $p=1$, we say that $w\in A_1$ if there exists a positive constant $C$ such that for every cube $Q$
\[\frac{1}{|Q|}\int_Qw \leq C\inf_Qw,\]
where $\inf_Q$ denotes the essential infimum of $w$ in $Q$.

The smallest constants $C$ for which the corresponding inequalities above hold are denoted by $[w]_{A_p}$, $1\leq p< \infty$ and called the characteristic $A_p$ constants of $w$.

Finally, the $A_\infty$ class is defined as the collection of all the $A_p$ classes, that is, $A_\infty=\bigcup_{p\geq 1}A_p$. It is well known that the $A_p$ classes are increasing on $p$, that is, if $p\leq q$ then $A_p\subseteq A_q$.  For further details and other properties of weights see \cite{javi} or \cite{grafakos}.

There are many conditions that characterize $A_\infty$. In this paper we will use the following one: $w\in A_\infty$ if there exist positive constants $C$ and $\varepsilon$ such that, for every cube $Q\subseteq \mathbb{R}^n$ and every measurable set $E\subseteq Q$ we have
\[\frac{w(E)}{w(Q)}\leq C\left(\frac{|E|}{|Q|}\right)^{\varepsilon},\]
where $w(E)=\int_E w$.

Every Muckenhoupt weight satisfies a  \emph{reverse H\"{o}lder condition}. That is, if
$w\in A_p$ for some $1\leq p<\infty$, then there exist positive constants $C$
and $s>1$ that depend only on the dimension $n$, $p$ and
$[w]_{A_p}$, such that 
\begin{equation*}
\left(\frac{1}{|Q|}\int_Q w^s(x)\,dx\right)^{1/s}\leq
\frac{C}{|Q|}\int_Q w(x)\,dx
\end{equation*}
for every cube $Q$. We write $w\in
\textrm{RH}_s$ to indicate that the inequality above holds,
and we denote by $[w]_{\textrm{RH}_s}$ the smallest constant $C$ associated to this condition.
% A weight $w$ belongs to RH$_\infty$ if
%there exists a positive constant $C$ such
%that\begin{equation*}
%\sup_Q w\leq\frac{C}{|Q|}\int_Q w,
%\end{equation*}
%for every $Q\subseteq \mathbb{R}^n$. 
It is easy to see that
$\textrm{RH}_s\subseteq \textrm{RH}_q$,
for every $1<q<s$.\\

Given a locally integrable function $f$, the \emph{Hardy-Littlewood maximal operator} is defined by
\[Mf(x)=\sup_{Q\ni x}\frac{1}{|Q|}\int_Q |f(y)|\,dy.\]

We say that $\varphi:[0,\infty)\to[0,\infty]$ is a \emph{Young function} if it is convex, increasing, $\varphi(0)=0$ and $\varphi(t)\to\infty$ when $t\to\infty$. Given a Young function $\varphi$, the maximal operator $M_\varphi$ is defined, for $f\in L^\varphi_{\textit{loc}}$, by
\[M_\varphi f(x)=\sup_{Q\ni x}\left\|f\right\|_{\varphi,Q},\]
where $\left\|f\right\|_{\varphi,Q}$ denotes the \emph{Luxemburg type average} of the function $f$ in the cube $Q$, defined by
\[\left\|f\right\|_{\varphi,Q}=\inf\left\{\lambda>0 : \frac{1}{|Q|}\int_Q\varphi\left(\frac{|f(y)|}{\lambda}\right)\,dy\leq 1 \right\}.\]

Given a weight $w$, we can also consider the \emph{weighted Luxemburg type average}
$\left\|f\right\|_{\varphi,Q,w}$ to be defined as
\[\left\|f\right\|_{\varphi,Q,w}=\inf\left\{\lambda>0 : \frac{1}{w(Q)}\int_Q\varphi\left(\frac{|f(y)|}{\lambda}\right)w(y)\,dy\leq 1 \right\}.\]

It is easy to check that from the definition above we have
\[\frac{1}{w(Q)}\int_Q\varphi\left(\frac{|f(y)|}{\left\|f\right\|_{\varphi,Q,w}}\right)w(y)\,dy\leq 1.\]

When $w\in A_\infty$, the measure given by $d\mu(x)=w(x)\,dx$ is doubling. Thus, by following the same arguments as in the result of Krasnosel'ski{\u\i} and Ruticki{\u\i} (\cite{KR}, see also \cite{raoren}) we can get that
\begin{equation}\label{eq: preliminares - equivalencia normal Luxemburo con infimo}
\|f\|_{\varphi,Q,w}\approx\inf_{\tau>0}\left\{\tau+\frac{\tau}{w(Q)}\int_{Q}\varphi\left(\frac{|f(x)|}{\tau}\right)w(x)\,dx\right\}.
\end{equation}

A Young function $\varphi$ is \textit{submultiplicative} if there exists a positive constant $C$ such that
\[\varphi(st)\leq C\varphi(s)\varphi(t)\]
for every $s,t\geq 0$. 
We say $\varphi$ has \textit{lower type} $p$, $0<p<\infty$ if there exists a positive constant $C_p$ such that
\[\varphi(st)\leq C_ps^p\varphi(t),\]
for every $0<s\leq 1$ and $t>0$. Also, $\varphi$ has \textit{upper type} $q$, $0<q<\infty$ if there exists a positive constant $C_q$ such that
\[\varphi(st)\leq C_qs^q\varphi(t),\]
for every $s\geq 1$ and $t>0$. As an immediate consequence of these definitions we have that, if $\varphi$ has lower type $p$ then $\varphi$ has lower type $\tilde p$, for every $0<\tilde p<p$. Also, if $\varphi$ has upper type $q$, then it has upper type $\tilde q$, for every $\tilde q>q$.

Given a function $\varphi:[0,\infty)\to[0,\infty]$ we define the \emph{generalized inverse} of $\varphi$ as
\[\varphi^{-1}(t)=\inf\{s\geq 0: \Phi(s)\geq t\},\]
with the convention that $\inf \emptyset =\infty$.

The \emph{generalized H\"{o}lder inequality} establishes that if $\varphi, \psi$ and $\phi$ are Young functions satisfying
\[\psi^{-1}(t)\phi^{-1}(t)\lesssim \varphi^{-1}(t)\]
for every $t\geq t^*>0$ then there exists a positive constant $C$ such that
\begin{equation}\label{eq: desigualdad de Hölder generalizada}
\|fg\|_{\varphi,Q}\leq C\|f\|_{\psi,Q}\|g\|_{\phi,Q}.
\end{equation}

\medskip

In this article we shall deal with Young functions of the type $\varphi(t)=t^r(1+\log^+t)^\delta$, where $r\geq 1$, $\delta \geq 0$ and $\log^+t=\max\{0,\log t\}$. It is well known that this class of functions are submultiplicative, have a lower type $r$ and have upper type $q$, for every $q>r$. Moreover, we have (see, for example, Proposici\'on~1.18 in \cite{FB}) that
\begin{equation}\label{eq: inversa generalizada de Phi}
\varphi^{-1}(t)\approx t^{1/r}(1+\log^+t)^{-\delta/r}.
\end{equation}

%Given a Young function $\varphi$ and $1<p<\infty$ we say that $\varphi$ satisfies the $B_p$ condition and denote it by $\varphi\in B_p$ if there exists a positive constant $c$ such that 
%\begin{equation}\label{condicion Bp}
%\int_c^\infty \frac{\varphi(t)}{t^p}\,\frac{dt}{t}<\infty.
%\end{equation}

\medskip

The proof of the main result can be reduced to study the dyadic version of the operator involved. 
 By a \emph{dyadic grid} $\mathcal{D}$ we understand a collection of cubes of $\mathbb{R}^n$ that satisfies the following properties:
\begin{enumerate}
	\item every cube $Q$ in $\mathcal{D}$ has side length $2^k$, for some $k\in\mathbb{Z}$;
	\item if $P\cap Q\neq\emptyset$ then $P\subseteq Q$ or $Q\subseteq P$;
	\item $\mathcal{D}_k=\{Q\in\mathcal{D}: \ell(Q)=2^k\}$ is a partition of $\mathbb{R}^n$ for every $k\in\mathbb{Z}$, where $\ell(Q)$ denotes the side length of $Q$.
\end{enumerate}

The dyadic maximal operator $M_{\varphi,\mathcal{D}}$ associated to the Young function $\varphi$ and to the dyadic grid $\mathcal{D}$ is defined in a similar way as above, but the supremum is taken over all cubes in $\mathcal{D}$.
It can be shown that
\begin{equation}\label{eq - intro - control diadico de Mphi}
M_\Phi f(x)\leq C\sum_{i=1}^{3^n}M_{\Phi, \mathcal{D}^{(i)}}f(x),
\end{equation}
where $\mathcal{D}^{(i)}$ are fixed dyadic grids.
%When $\varphi(t)=t$, we will simply denote  this operator with $M_{\mathcal{D}}$.

%\medskip
%
%The next result will be useful in our estimates. A proof can be found in \cite{Okikiolu}.
%
%\begin{teo}\label{teo_control_diadico}
%	There exist dyadic grids $\mathcal{D}^{(i)}$, $1\leq i\leq 3^n$ such that, for every cube $Q\subseteq\mathbb{R}^n$, there exist $i$ and $Q_0\in \mathcal{D}^{(i)}$ satisfying $Q\subseteq Q_0$ and $\ell(Q_0)\leq 3\ell(Q)$.
%\end{teo}

%Given a Young function $\varphi$, from the theorem above we obtain that
%\begin{equation}\label{control diadico de M phi}
%M_{\varphi}f(x)\leq C\sum_{i=1}^{3^n}M_{\varphi,\mathcal{D}^{(i)}}f(x).
%\end{equation}
%Indeed, fix $x\in\mathbb{R}^n$ and $Q$ a cube containing $x$. By Theorem~\ref{teo_control_diadico} we have a dyadic grid $\mathcal{D}^{(i)}$ and $Q_0\in \mathcal{D}^{(i)}$ with the desired properties. Then,
%\[\frac{1}{|Q|}\int_Q \varphi\left(\frac{|f(y)|}{\left\|f\right\|_{\varphi,Q_0}}\right)\,dy \leq\frac{|Q_0|}{|Q|}\frac{1}{|Q_0|}\int_{Q_0}\varphi\left(\frac{|f(y)|}{\left\|f\right\|_{\varphi,Q_0}}\right)\,dy
%\leq 3^n,\]
%so
%\[\left\|f\right\|_{\varphi,Q}\leq 3^n\left\|f\right\|_{\varphi,Q_0}\leq 3^n M_{\varphi, \mathcal{D}^{(i)}}(f)(x)\leq 3^n\sum_{i=1}^{3^n}M_{\varphi, \mathcal{D}^{(i)}}(f)(x).\]
%Thus, by taking supremum over all cubes $Q$ that contain $x$ we have the desired estimate. From \eqref{control diadico de M phi}, it will be sufficient to prove Theorem~\ref{teorema principal} for $M_{\Phi,\mathcal{D}}$, where $\Phi$ is a function in $\mathcal{F}_r$ and $\mathcal{D}$ is a general dyadic grid.

\section{Auxiliary results}\label{section: resultados auxiliares}

The following lemma gives us the decomposition of level sets of dyadic generalized maximal operators into dyadic cubes. A proof of this result can be found in (\cite{Berra}, Lemma 2.1).

\begin{lema}\label{lema: descomposicion de CZ del espacio}
	Given $\lambda>0$, a bounded function $f$ with compact support, a dyadic grid $\mathcal{D}$ and a Young function $\varphi$, there exists a family of maximal cubes $\{Q_j\}$ of $\mathcal{D}$ that satisfies
	\[\{x\in\mathbb{R}^n: M_{\varphi,\mathcal{D}}f(x)>\lambda\}=\bigcup_j Q_j,\]
	and $\left\|f\right\|_{\varphi,Q_j}>\lambda$  for every $j$.
\end{lema}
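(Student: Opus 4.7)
The plan is to carry out the standard stopping-time construction for dyadic maximal operators, adapted to averages of Luxemburg type. I would start by introducing the family
\[\mathcal{F}=\{Q\in\mathcal{D}:\|f\|_{\varphi,Q}>\lambda\}\]
and observing that, by the very definition of $M_{\varphi,\mathcal{D}}$, every point of the level set $\{M_{\varphi,\mathcal{D}}f>\lambda\}$ lies in at least one cube of $\mathcal{F}$, and conversely every point of any $Q\in\mathcal{F}$ belongs to the level set.

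The key structural input is that $f$ is bounded, say $|f|\leq N$, and supported in a ball $B(0,R)$. For any dyadic cube $Q$ this gives
\[\frac{1}{|Q|}\int_{Q}\varphi\!\left(\frac{|f(y)|}{\lambda}\right)dy\leq \frac{|B(0,R)|}{|Q|}\,\varphi\!\left(\frac{N}{\lambda}\right),\]
so as soon as $|Q|\geq|B(0,R)|\,\varphi(N/\lambda)$ the right-hand side is at most $1$ and therefore $\|f\|_{\varphi,Q}\leq\lambda$. Consequently $\mathcal{F}$ contains no dyadic cubes above a fixed side length, and each $Q\in\mathcal{F}$ has only finitely many dyadic ancestors, only some of which belong to $\mathcal{F}$. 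Hence each $Q\in\mathcal{F}$ is contained in a unique largest cube of $\mathcal{F}$; I let $\{Q_j\}$ denote the collection of these maximal cubes. By property~(2) of the dyadic grid, two distinct maximal elements of $\mathcal{F}$ cannot be nested, so the $\{Q_j\}$ are pairwise disjoint, and $\|f\|_{\varphi,Q_j}>\lambda$ by construction.

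To finish I would verify the set equality. The inclusion $\bigcup_j Q_j\subseteq\{M_{\varphi,\mathcal{D}}f>\lambda\}$ is immediate, since for $x\in Q_j$ we have $M_{\varphi,\mathcal{D}}f(x)\geq\|f\|_{\varphi,Q_j}>\lambda$. For the reverse inclusion, if $M_{\varphi,\mathcal{D}}f(x)>\lambda$, pick $Q\in\mathcal{D}$ with $Q\ni x$ and $\|f\|_{\varphi,Q}>\lambda$, so $Q\in\mathcal{F}$; the maximal cube of $\mathcal{F}$ containing $Q$ is then some $Q_j$ with $x\in Q_j$, closing the double inclusion.

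The only step that goes beyond the pure dyadic formalism is the scale bound that guarantees the existence of maximal cubes, and this is where I would expect any obstacle to sit; however, it follows at once from the hypothesis that $f$ is bounded and of compact support, so the whole argument reduces to a formal verification once that observation is in hand.
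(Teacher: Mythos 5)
Your argument is correct and is precisely the standard stopping-time construction that the paper itself does not reproduce but defers to the cited reference (\cite{Berra}, Lemma 2.1): the one non-formal ingredient is your observation that boundedness and compact support of $f$ force $\|f\|_{\varphi,Q}\leq\lambda$ for all dyadic cubes beyond a fixed scale, which is exactly what guarantees that maximal cubes of the family $\{Q\in\mathcal{D}:\|f\|_{\varphi,Q}>\lambda\}$ exist, and the double inclusion then follows formally as you indicate. The only caveat, harmless in this paper, is that your scale bound uses $\varphi(N/\lambda)<\infty$, so the argument as written requires $\varphi$ to be finite-valued (as all the Young functions appearing here are).
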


The next lemma is purely technical and gives a fundamental fact that will be crucial in the main proof. 

\begin{lema}\label{lema: estimacion constante epsilon}
	Let $f$ be the function defined in $[0,\infty)$ by 
	\[f(x)=\left\{\begin{array}{ccl}
	\left(1+\frac{1}{x}\right)^{\frac{x}{1+x}}& \textrm{ if } & x>0,\\
	1 & \textrm{ if} & x=0.
	
	\end{array}
	\right.\]
	Then we have that $1\leq f(x)\leq e^{1/e}$, for every $x\geq 0$.
\end{lema}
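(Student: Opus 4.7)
The plan is to work with $g(x)=\log f(x)=\frac{x}{1+x}\log\bigl(1+\tfrac{1}{x}\bigr)$ for $x>0$ and locate its critical points by elementary calculus; then verify the claimed bounds separately.

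First I would handle the lower bound, which is immediate: for $x>0$ both factors $\frac{x}{1+x}$ and $\log(1+1/x)$ are strictly positive, so $g(x)>0$, hence $f(x)>1$. Combined with $f(0)=1$ by definition, this gives $f(x)\geq 1$ on $[0,\infty)$.

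For the upper bound, I would first verify that $f$ extends continuously to $[0,\infty]$ with value $1$ at each endpoint. Near $x=0$ one writes $g(x)=\frac{x}{1+x}[\log(1+x)-\log x]$; since $x\log x\to 0$, one gets $g(x)\to 0$, matching $f(0)=1$. As $x\to\infty$, the expansion $\log(1+1/x)=1/x+O(1/x^2)$ together with $x/(1+x)\to 1$ gives $g(x)\to 0$ again. Thus $f$ is continuous on $[0,\infty)$ with $f(0)=\lim_{x\to\infty}f(x)=1$, and so attains its supremum at an interior critical point.

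Next I would compute
\[
g'(x)=\frac{1}{(1+x)^{2}}\log\frac{1+x}{x}+\frac{x}{1+x}\left(\frac{1}{1+x}-\frac{1}{x}\right)=\frac{1}{(1+x)^{2}}\left[\log\frac{1+x}{x}-1\right].
\]
The unique critical point is the solution of $\log\bigl((1+x)/x\bigr)=1$, namely $x_{0}=1/(e-1)$. Since the bracket is positive for $x<x_{0}$ and negative for $x>x_{0}$, this is the global maximum. At $x_{0}$ one has $(1+x_{0})/x_{0}=e$, hence $x_{0}/(1+x_{0})=1/e$, so
\[
g(x_{0})=\frac{1}{e}\cdot\log e=\frac{1}{e},\qquad f(x_{0})=e^{1/e},
\]
yielding the claimed upper bound. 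There is no real obstacle here — the only subtlety is confirming the boundary behavior at $0$ to rule out values larger than $e^{1/e}$ hidden near the endpoint, which the limit computation above dispatches.
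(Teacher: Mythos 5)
Your argument is correct and complete: the lower bound follows from positivity of both factors of $\log f$, the derivative computation $g'(x)=\frac{1}{(1+x)^2}\bigl[\log\frac{1+x}{x}-1\bigr]$ is right, the unique critical point $x_0=1/(e-1)$ gives $g(x_0)=1/e$, and the sign change of $g'$ (or, as you argue, continuity plus the boundary limits equal to $1$) shows this is the global maximum, so $f\leq e^{1/e}$. The paper itself states this lemma as a purely technical fact and gives no proof, so there is nothing to compare against; your elementary calculus proof is exactly the kind of verification that was left to the reader. As a side remark, the substitution $t=1+\frac{1}{x}$ turns $f(x)$ into $t^{1/t}$ with $t\in(1,\infty)$, so the lemma is precisely the classical statement $1\leq t^{1/t}\leq e^{1/e}$ for $t\geq 1$, which your critical-point computation (maximum at $t=e$) reproves.
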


The following proposition establishes that if $\Psi$ and $\Phi$ are equivalent Young functions for $t$ large, then they have equivalent Luxemburg norm on every cube $Q$. As a consequence, we have that $M_\Psi\approx M_\Phi$.

\begin{propo}\label{propo: normas de funciones equivalentes para t grande son equivalentes}
	Let $\Phi$ and $\Psi$ be Young functions that verify $\Phi(t) \approx \Psi(t)$ for every $t\geq t_0\geq 0$. Then $\|{\cdot}\|_{\Phi,Q}\approx \|{\cdot}\|_{\Psi,Q}$, for every cube $Q$.
\end{propo}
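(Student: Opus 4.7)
The plan is to exploit convexity of Young functions (which together with $\Psi(0)=0$ gives the key scaling $\Psi(t/C)\leq \Psi(t)/C$ for $C\geq 1$) to absorb the contribution coming from the "small $t$'' region, where the equivalence $\Phi\approx\Psi$ may fail. By hypothesis there are constants $c_1,c_2>0$ with $c_1\Phi(t)\leq\Psi(t)\leq c_2\Phi(t)$ for every $t\geq t_0$. I will show one direction, namely $\|f\|_{\Psi,Q}\leq M\|f\|_{\Phi,Q}$ for a constant $M$ depending only on $c_2,\Psi,t_0$; the reverse inequality follows by swapping the roles of $\Phi$ and $\Psi$ and using $c_1^{-1}$ instead of $c_2$.

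First I fix a cube $Q$, a function $f$ and set $\lambda=\|f\|_{\Phi,Q}$, so that
\[
\frac{1}{|Q|}\int_Q\Phi\!\left(\frac{|f(y)|}{\lambda}\right)dy\leq 1.
\]
I then split $Q$ into $E_1=\{y\in Q:|f(y)|/\lambda\geq t_0\}$ and $E_2=Q\setminus E_1$. On $E_1$ the equivalence applies, so $\Psi(|f|/\lambda)\leq c_2\Phi(|f|/\lambda)$ and the corresponding average is bounded by $c_2$. On $E_2$ monotonicity gives $\Psi(|f|/\lambda)\leq \Psi(t_0)$, so the corresponding average is at most $\Psi(t_0)$. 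Adding,
\[
\frac{1}{|Q|}\int_Q\Psi\!\left(\frac{|f(y)|}{\lambda}\right)dy\leq c_2+\Psi(t_0)=:M.
\]

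Next comes the key absorption step. Assuming $M\geq 1$ (otherwise the inequality for the Luxemburg norms is already obtained with constant $1$), convexity of $\Psi$ together with $\Psi(0)=0$ yields $\Psi(s/M)\leq \Psi(s)/M$ for every $s\geq 0$. Applying this pointwise and integrating gives
\[
\frac{1}{|Q|}\int_Q\Psi\!\left(\frac{|f(y)|}{M\lambda}\right)dy\leq \frac{1}{M}\cdot\frac{1}{|Q|}\int_Q\Psi\!\left(\frac{|f(y)|}{\lambda}\right)dy\leq 1,
\]
whence by definition of the Luxemburg norm $\|f\|_{\Psi,Q}\leq M\lambda=M\|f\|_{\Phi,Q}$. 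Since $M$ depends only on $c_2$, $t_0$ and $\Psi(t_0)$, the bound is uniform in $Q$ and $f$.

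The main (minor) obstacle is precisely the handling of the set $E_2$: the hypothesis gives no pointwise control of $\Psi$ in terms of $\Phi$ there, so we must absorb the resulting extra constant $\Psi(t_0)$ into the Luxemburg scaling. The convexity trick $\Psi(t/M)\leq \Psi(t)/M$ is what makes this absorption legal and provides a uniform constant, completing the proof.
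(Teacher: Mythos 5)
Your proof is correct and follows essentially the same route as the paper: split the cube according to whether $|f|/\lambda$ exceeds $t_0$, bound the average of one Young function at $\lambda$ equal to the other's Luxemburg norm by a constant like $c_2+\Psi(t_0)$, and conclude by symmetry. The only difference is cosmetic: you make explicit the convexity absorption $\Psi(s/M)\leq\Psi(s)/M$, which the paper leaves implicit when passing from the bound $\Phi(t_0)+C_2$ to the factor $\max\{1,\Phi(t_0)+C_2\}$.
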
  

\begin{proof}
	Fix $f\in L^{\Phi}_\textit{loc}$. By hypothesis there exist two positive constants $C_1$ and $C_2$ such that
	\[C_1\Psi(t)\leq \Phi(t)\leq C_2\Psi(t),\]
	for $t\geq t_0$.
	Thus, given $\lambda>0$ we have that
	\begin{align*}
	\frac{1}{|Q|}\int_Q\Phi\left(\frac{|f|}{\lambda}\right)&=\frac{1}{|Q|}\int_{Q\cap\{|f|\leq t_0\lambda\}}\Phi\left(\frac{|f|}{\lambda}\right)+\frac{1}{|Q|}\int_{Q\cap\{|f|>t_0\lambda\}}\Phi\left(\frac{|f|}{\lambda}\right)\\
	&\leq \Phi(t_0)+\frac{C_2}{|Q|}\int_Q\Psi\left(\frac{|f|}{\lambda}\right).
	\end{align*}
	If we set $\lambda=\|{f}\|_{\Psi,Q}$ then
	\[\frac{1}{|Q|}\int_Q\Phi\left(\frac{|f|}{\lambda}\right)\leq \Phi(t_0)+C_2,\]
	which implies that $\|{f}\|_{\Phi,Q}\leq \max\{1,\Phi(t_0)+C_2\}\|{f}\|_{\Psi,Q}$. By interchanging the roles of $\Phi$ and $\Psi$ we can obtain the other inequality. 
\end{proof}

The next result gives a version of Jensen inequality for Luxemburg averages.

\begin{lema}\label{lema: Jensen para promedios de tipo Luxemburgo}
		Let $\Phi$ be a Young function, $f\in L^{\Phi}_{\textit{loc}}$ and $r\geq 1$. Then there exists a positive constant $C$ such that for every cube $Q$
	\[\|f\|_{\Phi,Q}^r\leq C\|f^r\|_{\Phi,Q}.\]
\end{lema}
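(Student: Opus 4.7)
The plan is to bound $\|f\|_{\Phi,Q}$ by evaluating the average $\tfrac{1}{|Q|}\int_Q\Phi(|f|/\alpha)$ at the candidate scale $\alpha=(C\mu)^{1/r}$, where $\mu=\|f^r\|_{\Phi,Q}$, and choosing $C$ large enough that this average is at most $1$. By the very definition of the Luxemburg norm, once $\tfrac{1}{|Q|}\int_Q\Phi(|f|/\alpha)\leq 1$ is established, one obtains $\|f\|_{\Phi,Q}\leq\alpha$, i.e.\ $\|f\|_{\Phi,Q}^r\leq C\mu$, which is exactly the desired inequality.

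To carry this out I would split the cube $Q$ into the two regions $E_1=\{|f|\leq\mu^{1/r}\}$ and $E_2=\{|f|>\mu^{1/r}\}$, and estimate the corresponding integrals separately. On $E_1$, the pointwise bound $|f|/\alpha\leq 1/C^{1/r}\leq 1$ together with the convexity inequality $\Phi(\beta t)\leq \beta\Phi(t)$ valid for $\beta\in[0,1]$ (a consequence of $\Phi$ being convex and vanishing at $0$) gives $\Phi(|f|/\alpha)\leq C^{-1/r}\Phi(1)$. On $E_2$, I would use $r\geq 1$ to deduce $|f|^{r-1}>\mu^{(r-1)/r}$, and hence $|f|<|f|^r/\mu^{(r-1)/r}$; substituting this into the argument of $\Phi$ and factoring the remaining $C^{-1/r}\leq 1$ out of $\Phi$ by convexity yields
\[\Phi\!\left(\frac{|f|}{\alpha}\right)\;\leq\;\Phi\!\left(\frac{|f|^r}{C^{1/r}\mu}\right)\;\leq\;\frac{1}{C^{1/r}}\,\Phi\!\left(\frac{|f|^r}{\mu}\right).\]

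Adding the two contributions and using that $\tfrac{1}{|Q|}\int_Q\Phi(|f|^r/\mu)\leq 1$ by definition of $\mu$, the full average is bounded by $C^{-1/r}\bigl(\Phi(1)+1\bigr)$. Choosing $C=(\Phi(1)+1)^r$ makes this quantity at most $1$, which completes the argument; note that $C$ depends only on $\Phi$ and $r$, not on $f$ or $Q$.

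The main obstacle, although very mild, is the careful separation into the two regimes $|f|\lessgtr\mu^{1/r}$: the convexity trick $\Phi(\beta t)\leq\beta\Phi(t)$ only applies for $\beta\leq 1$, so one must arrange each pointwise estimate so that the argument of $\Phi$ can be written as $\beta\cdot(\text{something controlled})$ with $\beta\leq 1$. The rescaling is precisely what makes the exponent $r$ collapse and lets the single integrability hypothesis on $|f|^r$ feed into the desired bound for $|f|$.
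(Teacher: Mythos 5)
Your proof is correct and follows essentially the same route as the paper: test the Luxemburg norm at the scale determined by $\|f^r\|_{\Phi,Q}^{1/r}$, split $Q$ according to whether $|f|$ exceeds that threshold, bound the small part by $\Phi(1)$ and the large part by $\Phi\bigl(|f|^r/\|f^r\|_{\Phi,Q}\bigr)$ via monotonicity. The only cosmetic difference is that you absorb the constant into the test scale via convexity at the outset, whereas the paper adjusts the constant after obtaining the bound $\Phi(1)+1$ for the average.
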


\begin{proof}
	Notice that if $t\geq 1$, then $\Phi(t^{1/r})\leq \Phi(t)$. Picking $\lambda=\|f^r\|_{\Phi,Q}^{1/r}$ we can estimate
	\begin{align*}
	\frac{1}{|Q|}\int_Q \Phi\left(\frac{|f(y)|}{\lambda}\right)\,dy&=\frac{1}{|Q|}\int_{Q\cap\{|f|\leq \lambda\}} \Phi\left(\frac{|f(y)|}{\lambda}\right)\,dy+\frac{1}{|Q|}\int_{Q\cap\{|f|> \lambda\}} \Phi\left(\frac{|f(y)|}{\lambda}\right)\,dy\\
	&\leq \Phi(1)+\frac{1}{|Q|}\int_{Q\cap\{|f|> \lambda\}} \Phi\left(\left(\frac{|f(y)|^r}{\|f^r\|_{\Phi,Q}}\right)^{1/r}\right)\,dy\\
	&\leq \Phi(1)+\frac{1}{|Q|}\int_{Q} \Phi\left(\frac{|f(y)|^r}{\|f^r\|_{\Phi,Q}}\right)\,dy\\
	&\leq \Phi(1)+1,
	\end{align*}
	and therefore 
	\[\|f\|_{\Phi,Q}\leq C\|f^r\|_{\Phi,Q}^{1/r}.\qedhere\]
\end{proof}

\medskip

The following proposition provides a pointwise estimate between the operators $M_{\gamma,\Phi}$ and $M_\xi$, where the functions involved are related in certain way. This result can be seen as a Hedberg type estimate for generalized maximal operators.

\begin{propo}\label{propo: estimacion puntual M_{gamma, Phi}}
	Let $0<\gamma<n$, $1\leq p<n/\gamma$ and $1/q=1/p-\gamma/n$. Let $\Phi,\xi$ be Young functions verifying $t^{\gamma/n}\xi^{-1}(t)\leq C \Phi^{-1}(t)$, for every $t\geq t_0\geq 0$. Then, for every nonnegative functions $w$ and $f\in L^p$ we have that
	\[M_{\gamma,\Phi}\left(\frac{f}{w}\right)(x)\leq CM_\xi\left(\frac{f^{p/q}}{w}\right)(x)\left(\int_{\mathbb{R}^n}f^p(y)\,dy\right)^{\gamma/n},\]
	for every $x\in \mathbb{R}^n$. 
\end{propo}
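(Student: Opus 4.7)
The plan is to obtain the pointwise inequality by a suitable application of the generalized Hölder inequality \eqref{eq: desigualdad de Hölder generalizada}, applied separately on each cube $Q$ containing $x$, and then taking the supremum. The starting observation is the algebraic identity
\[\frac{f}{w}=\frac{f^{p/q}}{w}\cdot f^{1-p/q},\]
where the definition $1/q=1/p-\gamma/n$ gives the crucial exponent relation $1-p/q=p\gamma/n$. Thus one factors $f/w$ as a product in which the second factor is precisely a power of $f$ that will eventually produce the global $L^p$ norm.

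Next I would choose the auxiliary Young function $\phi(t)=t^{n/\gamma}$, so that $\phi^{-1}(t)=t^{\gamma/n}$. The hypothesis
\[t^{\gamma/n}\xi^{-1}(t)\leq C\,\Phi^{-1}(t),\qquad t\geq t_0,\]
then reads exactly as $\xi^{-1}(t)\phi^{-1}(t)\lesssim \Phi^{-1}(t)$, which is the assumption required to apply \eqref{eq: desigualdad de Hölder generalizada}. Doing so on an arbitrary cube $Q\ni x$ gives
\[\Bigl\|\tfrac{f}{w}\Bigr\|_{\Phi,Q}\;\leq\; C\,\Bigl\|\tfrac{f^{p/q}}{w}\Bigr\|_{\xi,Q}\,\bigl\|f^{p\gamma/n}\bigr\|_{\phi,Q}.\]
Because $\phi(t)=t^{n/\gamma}$, its Luxemburg norm is just a rescaled $L^{n/\gamma}$ average, and a direct computation yields
\[\bigl\|f^{p\gamma/n}\bigr\|_{\phi,Q}=\Bigl(\frac{1}{|Q|}\int_Q f^{p}\Bigr)^{\gamma/n}.\]

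Multiplying by $|Q|^{\gamma/n}$ makes the $1/|Q|$ inside the last factor cancel, so that
\[|Q|^{\gamma/n}\Bigl\|\tfrac{f}{w}\Bigr\|_{\Phi,Q}\;\leq\; C\,\Bigl\|\tfrac{f^{p/q}}{w}\Bigr\|_{\xi,Q}\,\Bigl(\int_Q f^{p}\Bigr)^{\gamma/n}\;\leq\; C\,\Bigl\|\tfrac{f^{p/q}}{w}\Bigr\|_{\xi,Q}\,\Bigl(\int_{\mathbb{R}^n} f^{p}\Bigr)^{\gamma/n}.\]
Taking the supremum over all cubes $Q\ni x$ on both sides gives the desired inequality.

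The only delicate point I anticipate is the usual one for generalized Hölder inequalities whose hypothesis is only satisfied for $t\geq t_0$: one may need to decompose the integral defining the Luxemburg norm into the region where the relevant argument is small (bounded contribution, absorbed into the constant) and the region where it exceeds $t_0$ (where the inverse comparison applies). This is a routine adaptation of \eqref{eq: desigualdad de Hölder generalizada}, analogous to the truncation argument in the proof of Proposition~\ref{propo: normas de funciones equivalentes para t grande son equivalentes}, and the exponent bookkeeping above is the genuinely informative part of the argument.
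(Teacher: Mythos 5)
Your proof is correct and is essentially the argument in the paper: both apply the generalized H\"older inequality \eqref{eq: desigualdad de Hölder generalizada} on each cube $Q\ni x$ with the pair $\xi$ and $\phi(t)=t^{n/\gamma}$, and your direct factorization $\frac{f}{w}=\frac{f^{p/q}}{w}\cdot f^{p\gamma/n}$ coincides with the one the paper reaches through the auxiliary substitution $g=f^{p/s}w^{-q/s}$, $s=1+q/p'$. The truncation issue you flag at the end requires no extra work, since \eqref{eq: desigualdad de Hölder generalizada} is already stated under the hypothesis that the inverse comparison holds only for $t\geq t^*>0$.
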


\begin{proof}
	Define $s=1+q/p'$ and let $g=f^{p/s}w^{-q/s}$. Then
	\[\frac{f}{w}=g^{s/p}w^{q/p-1}.\]
	Fix $x$ and a cube $Q$ such that $x\in Q$. By using generalized H\"{o}lder inequality \eqref{eq: desigualdad de Hölder generalizada} we obtain%and  Lemma~\ref{lema: Jensen para promedios de tipo Luxemburgo} we obtain%\ref{teo: desigualdad de Holder generalizada}, 
	\begin{align*}
	|Q|^{\gamma/n}\left\|\frac{f}{w}\right\|_{\Phi,Q}&=|Q|^{\gamma/n}\left\|g^{s/p}w^{q/p-1}\right\|_{\Phi,Q}\\
	&=|Q|^{\gamma/n}\left\|g^{1-\gamma/n}g^{s/p+\gamma/n-1}w^{q\gamma/n}\right\|_{\Phi,Q}\\
	&\leq C|Q|^{\gamma/n}\left\|g^{1-\gamma/n}\right\|_{\xi,Q}\left\|g^{s/p+\gamma/n-1}w^{q\gamma/n}\right\|_{L^{n/\gamma},Q}\\
	&= C\left\|f^{p/q}w^{-1}\right\|_{\xi,Q}\left(\int_Q f^p(y)\,dy\right)^{\gamma/n}\\
	&\leq C M_\xi\left(\frac{f^{p/q}}{w}\right)(x)\left(\int_{\mathbb{R}^n} f^p(y)\,dy\right)^{\gamma/n}.\qedhere
	\end{align*}
\end{proof}

%\begin{obs}
%	In fact, if we do not apply Lemma~\ref{lema: Jensen para promedios de tipo Luxemburgo} in the proof above we can conclude that
%	%\begin{equation}\label{eq - eq auxiliar propo estimacion puntual M_{gamma, Phi}}
%
%	%\end{equation}
%\end{obs}

\section{Proof of the main result}\label{section: prueba principal}

We devote this section to the proof of Theorem~\ref{teo: teorema principal} and its corollary. As we stated before, it will be enough to obtain \eqref{eq - intro - teorema principal, version mas fuerte}. We shall present and prove some auxiliary results that will be useful to this purpose.  Recall we are dealing with a function $\Phi\in \mathfrak{F}_r$, where $r\geq 1$ is given.
By \eqref{eq - intro - control diadico de Mphi}, it will be enough to prove that
	\[uv^r\left(\left\{x\in \mathbb{R}^n: \frac{M_{\Phi,\mathcal{D}}(fv)(x)}{v(x)}>t
\right\}\right)\leq C\int_{\mathbb{R}^n}\Phi\left(\frac{|f(x)|}{t}\right)u(x)v^r(x)\,dx,\]
where $\mathcal{D}$ is a given dyadic grid.
We can also assume that $t=1$ and that $g=|f|v$ is a bounded function with compact support. Then, we can write
\[uv^r\left(\left\{x\in \mathbb{R}^n: \frac{M_{\Phi,\mathcal{D}}(fv)(x)}{v(x)}>1
\right\}\right)=\sum_{k\in \mathbb{Z}} uv^r\left(\left\{x: \frac{M_{\Phi,\mathcal{D}}g(x)}{v(x)}>1, a^k<v\leq a^{k+1}
\right\}\right)=\sum_{k\in \mathbb{Z}}uv^r(E_k).\]  
For every $k\in \mathbb{Z}$ we consider the set
\[\Omega_k=\{x\in \mathbb{R}^n: M_{\Phi,\mathcal{D}}g(x)>a^k\},\]
and by virtue of Lemma~\ref{lema: descomposicion de CZ del espacio}, there exists a collection of dyadic cubes $\{Q_j^k\}_j$ that satisfies
\[\Omega_k=\bigcup_j Q_j^k,\]
and $\|g\|_{\Phi,Q_j^k}>a^k$ for each $j$. By maximality, we have
\begin{equation}\label{eq: promedios Luxemburgo de g son como a^k}
a^k<\|g\|_{\Phi,Q_j^k}\leq 2^n a^k, \quad \textrm{ for every }j.
\end{equation}
We proceed now to split for every $k\in \mathbb{Z}$, as in \cite{L-O-P}, the obtained cubes in different classes. If $\ell \in \mathbb{N}_0$, we set
\[\Lambda_{\ell,k}=\left\{Q_j^k: a^{(k+\ell)r}\leq \frac{1}{|Q_j^k|}\int_{Q_j^k} v^r< a^{(k+\ell+1)r}\right\},\]
and also
\[\Lambda_{-1,k}=\left\{Q_j^k:  \frac{1}{|Q_j^k|}\int_{Q_j^k} v^r< a^{kr}\right\}.\]

The next step is to split every cube in the family $\Lambda_{-1,k}$. Fixed $Q_j^k\in \Lambda_{-1,k}$, we perform the  Calder\'on-Zygmund decomposition of the function $v^r\mathcal{X}_{Q_j^k}$ at height $a^{kr}$. Then we obtain, for each $k$, a collection of maximal cubes, $\left\{Q_{j,i}^k\right\}_i$, contained in $Q_j^k$ and which satisfy
\begin{equation}\label{eq: promedios de v^r sobre Q_{j,i}^k son como a^{kr}}
a^{kr}<\frac{1}{|Q_{j,i}^k|}\int_{Q_{j,i}^k}v^r\leq 2^na^{kr},\quad \textrm{ for every }i.
\end{equation}

We now define, for every  $\ell\geq 0$ the sets
\[\Gamma_{\ell,k}=\left\{Q_j^k\in \Lambda_{\ell,k}: \left|Q_j^k\cap \left\{x: a^k<v\leq a^{k+1}\right\}\right|>0\right\},\]
and also
\[\Gamma_{-1,k}=\left\{Q_{j,i}^k\in \Lambda_{-1,k}: \left|Q_{j,i}^k\cap \left\{x: a^k<v\leq a^{k+1}\right\}\right|>0\right\}.\]

Since $E_k\subseteq \Omega_k$, we can estimate

\begin{align*}
\sum_{k\in \mathbb{Z}} uv^r(E_k)&=\sum_{k\in \mathbb{Z}} uv^r(E_k\cap \Omega_k)\\
&=\sum_{k\in \mathbb{Z}} \sum_j uv^r(E_k\cap Q_j^k)\\
&\leq \sum_{k\in \mathbb{Z}}\sum_{\ell\geq 0} \sum_{Q_j^k\in \Gamma_{\ell,k}}a^{(k+1)r}u(E_k\cap Q_j^k)+\sum_{k\in \mathbb{Z}}\,\, \sum_{i:Q_{j,i}^k\in \Gamma_{-1,k}}a^{(k+1)r}u(Q_{j,i}^k)\\
&=A+B.
\end{align*}

If we can prove that given a negative integer $N$, there exists a positive constant $C$, independent of $N$, for which the following estimate
\begin{equation}\label{eq: desigualdad con C independiente de N}
\sum_{k\geq N}\sum_{\ell\geq 0} \sum_{Q_j^k\in \Gamma_{\ell,k}}a^{(k+1)r}u(E_k\cap Q_j^k)+\sum_{k\geq N} \sum_{i:Q_{j,i}^k\in \Gamma_{-1,k}}a^{(k+1)r}u(Q_{j,i}^k)\leq C\int_{\mathbb{R}^n}\Phi\left(|f|\right)uv^r
\end{equation}
holds, then the proof would be completed by letting $N\to-\infty$.

\medskip

In order to prove \eqref{eq: desigualdad con C independiente de N} we need some auxiliary results. The two following lemmas deal with the family of cubes defined above. Both were set and proved in \cite{L-O-P}. However, we include the proof of the second one since there are slight changes because we work with Luxemburg averages.

\begin{lema}[\cite{L-O-P}, Lemma 2.3]\label{lema: acotacion de u(Ek cap Q_j^k)}
	If $\ell\geq 0$ and $Q_j^k\in \Gamma_{\ell,k}$, then there exist two positive constants $c_1$ and $c_2$, depending only on $u$ and $v^r$ such that
	\begin{equation}\label{eq: tesis lema: acotacion de u(Ek cap Q_j^k)}
	u(E_k\cap Q_j^k)\leq c_1e^{-c_2\ell r}u(Q_j^k).
	\end{equation}
\end{lema}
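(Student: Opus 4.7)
The plan is to reduce the lemma to a Lebesgue-measure estimate for the set $F_{j,k}=\{x\in Q_j^k:\, a^k<v(x)\leq a^{k+1}\}$, and then transfer that estimate to $u$-measure via the $A_\infty$ property of $u$. Since $E_k\cap Q_j^k\subseteq F_{j,k}$ by the very definition of $E_k$, it suffices to prove that
\[u(F_{j,k})\leq c_1 e^{-c_2\ell r}\,u(Q_j^k).\]

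First I would exploit the two-sided pointwise information on $v^r$ to compare the Lebesgue- and $v^r$-relative sizes of $F_{j,k}$ inside $Q_j^k$. On $F_{j,k}$ we have $v^r\leq a^{(k+1)r}$, so $v^r(F_{j,k})\leq a^{(k+1)r}|F_{j,k}|$. The membership $Q_j^k\in\Lambda_{\ell,k}$ yields the opposite-direction bound $v^r(Q_j^k)\geq a^{(k+\ell)r}|Q_j^k|$. Dividing these two estimates gives
\[\frac{v^r(F_{j,k})}{v^r(Q_j^k)}\leq a^{(1-\ell)r}\,\frac{|F_{j,k}|}{|Q_j^k|}.\]

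Next I would invoke the $A_\infty$ hypothesis on $v^r$ in its equivalent reverse form: there exist $C>0$ and $\delta\in(0,1)$ such that $|E|/|Q|\leq C(v^r(E)/v^r(Q))^\delta$ for every cube $Q$ and every measurable $E\subseteq Q$. Combining this with the previous display produces a self-referential inequality for $\theta:=|F_{j,k}|/|Q_j^k|$ of the shape $\theta\leq C(a^{(1-\ell)r}\theta)^\delta$, whose solution is
\[\frac{|F_{j,k}|}{|Q_j^k|}\leq C'\, a^{-c\ell r},\]
for suitable $C'>0$ and $c>0$ depending only on $r$ and on the $A_\infty$ constants of $v^r$. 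Finally, since $u\in A_1\subseteq A_\infty$, the standard $A_\infty$ estimate for $u$ gives $u(F_{j,k})/u(Q_j^k)\leq C(|F_{j,k}|/|Q_j^k|)^{\varepsilon_u}$, and inserting the previous decay yields the required bound with $c_2=c\,\varepsilon_u \log a$.

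The main (and essentially only) obstacle is packaging the two $A_\infty$ quantifications in the correct direction: $v^r$ must be used in the reverse form (controlling Lebesgue size by $v^r$ size), whereas $u$ is used in the forward direction. One must also choose $\delta<1$ (harmless by monotonicity) so that the self-referential inequality can be solved for $\theta$. The cases $\ell=0,1$ yield only a trivial estimate, but this causes no trouble provided $c_1$ is chosen large enough at the end.
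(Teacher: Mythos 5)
Your proof is correct: reducing to $F_{j,k}$, playing the pointwise bound $v\leq a^{k+1}$ on $F_{j,k}$ against the lower bound $a^{(k+\ell)r}\leq |Q_j^k|^{-1}\int_{Q_j^k}v^r$ from $\Lambda_{\ell,k}$, and then using the $A_\infty$ property of $v^r$ in the reverse form $|E|/|Q|\lesssim (v^r(E)/v^r(Q))^\delta$ followed by the forward $A_\infty$ estimate for $u\in A_1$ is exactly the argument of Lemma 2.3 in \cite{L-O-P}, which the paper cites without reproducing (the adaptation to $v^r$ and the factor $r$ in the exponent come out just as you describe, with $c_1$ harmlessly absorbing the $a^{r\delta/(1-\delta)}$-type factor). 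The only cosmetic remark is that the self-referential step is not needed: since $|F_{j,k}|/|Q_j^k|\leq 1$, the bound $v^r(F_{j,k})/v^r(Q_j^k)\leq a^{(1-\ell)r}$ already gives $|F_{j,k}|/|Q_j^k|\leq C a^{(1-\ell)r\delta}$ directly, so the restriction $\delta<1$ is not required.
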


\begin{lema}\label{lema: sparsity de los cubos}
	If $Q$ is a cube in $\Gamma=\cup_{\ell\geq -1}\cup_{k\geq N} \Gamma_{\ell,k}$, then there exists a positive constant $C$, independent of $Q$, such that
	\[\left|\bigcup_{Q'\in \Gamma, Q'\subsetneq Q} Q'\right|\leq C|Q|.\]
\end{lema}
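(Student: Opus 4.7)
My plan is to follow closely the sparsity argument from \cite{L-O-P}, with the modifications needed to handle Luxemburg averages and the extra family $\Lambda_{-1,k}$ arising from the inner Calder\'on-Zygmund refinement of $v^r$. Fix $Q \in \Gamma_{\ell,k}$ and let $\{Q_s\}_s$ denote the family of maximal dyadic cubes in $\Gamma$ strictly contained in $Q$. As dyadic cubes, they are pairwise disjoint and
\[\bigcup_{Q' \in \Gamma,\, Q' \subsetneq Q} Q' \;=\; \bigsqcup_s Q_s,\]
so it suffices to bound $\sum_s |Q_s|$ by $C|Q|$ with $C<1$.

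Writing $\widetilde \ell = \max(\ell, 0)$ and analogously $\widetilde \ell_s$ for each $Q_s$, the bounds built into the definitions of $\Lambda_{\ell, k}$ and of the CZ cubes yield uniformly $v^r(Q_s)/|Q_s| \geq a^{(k_s + \widetilde \ell_s) r}$ and $v^r(Q)/|Q| \leq 2^n a^{(k + \widetilde \ell + 1) r}$. Combining these with the disjointness estimate $\sum_s v^r(Q_s) \leq v^r(Q)$ gives the quantitative engine of the proof:
\[\sum_s a^{(k_s + \widetilde \ell_s) r} |Q_s| \;\leq\; 2^n\, a^{(k + \widetilde \ell + 1) r}|Q|.\]

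The idea now is to split $\{Q_s\}_s$ into a sub-family $\mathcal A$ consisting of those $Q_s$ for which $k_s + \widetilde \ell_s \geq k + \widetilde \ell + M$, with $M$ a large integer to be chosen, and its complement $\mathcal B$. The displayed estimate immediately yields $\sum_{Q_s \in \mathcal A} |Q_s| \leq 2^n a^{(1-M)r}|Q|$, which is arbitrarily small for $M$ large compared to $r$ and $a>1$. To handle $\mathcal B$, I would use that only finitely many pairs $(k_s, \widetilde \ell_s)$ can occur (since $k_s + \widetilde\ell_s$ is confined to a window of length $M$ above $k+\widetilde\ell$ and the maximality of $Q = Q_j^k$ in $\Omega_k$ together with the dyadic nesting forces a lower bound on $k_s$ individually), and that within each such pair the cubes $Q_s$ form a disjoint collection at a fixed $v^r$-density level. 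Applying the $A_\infty$ property of $v^r$ together with the intersection condition $|Q_s \cap \{a^{k_s} < v \leq a^{k_s+1}\}|>0$ controls the $v^r$-density on each $Q_s$ in terms of $|Q_s|/|Q|$, producing $\sum_{\mathcal B}|Q_s| \leq \eta|Q|$ with $\eta$ small by choosing $a$ large.

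The main obstacle I anticipate is the $\mathcal B$ estimate: converting the qualitative intersection condition into a quantitative bound on $|Q_s|/|Q|$ that remains uniform as we sum over the finitely many admissible pairs and over the distinct cubes within each pair. A further technical point is the case $\ell_s = -1$, where the sub-cubes $Q_{j_s, i_s}^{k_s}$ arise from a two-step stopping-time construction, so the ordering argument controlling $k_s$ must be carried out at both the outer maximal level and the inner CZ level; this is exactly the place where the treatment departs from \cite{L-O-P}, but the same strategy applies with minor bookkeeping. Choosing first $M$ large and then $a$ large enough finally produces the sparsity constant $C<1$, independent of $Q$.
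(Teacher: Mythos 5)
Your proposal has two genuine problems. First, your opening reduction makes the statement you are bounding trivial and simultaneously discards the lemma's real content: once you pass to the maximal cubes $\{Q_s\}$ of $\Gamma$ strictly contained in $Q$, these are pairwise disjoint subsets of $Q$, so $\bigl|\bigcup_{Q'\in\Gamma,\,Q'\subsetneq Q}Q'\bigr|=\sum_s|Q_s|\leq|Q|$ with $C=1$ and nothing more needs to be said; in particular your $\mathcal A$/$\mathcal B$ machinery is not proving anything beyond this. What the lemma is really about --- and what the paper's proof actually establishes and the main argument needs --- is the Carleson-type packing estimate $\sum_{Q'\in\Gamma,\,Q'\subsetneq Q}|Q'|\leq C|Q|$, where the cubes are counted with multiplicity over all levels $k>t$ (the same portion of $Q$ is typically covered by cubes of $\Gamma$ at every such level); your reduction to a disjoint family of maximal cubes throws that multiplicity away. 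Second, even within your own scheme the $\mathcal B$ estimate, which you yourself flag as the main obstacle, is not carried out, and the tools you propose do not clearly close it: the number of admissible pairs $(k_s,\widetilde\ell_s)$ in your window is of order $(\widetilde\ell+M)^2$, hence depends on the class of $Q$, your disjointness engine alone only yields $\sum_{\mathcal B}|Q_s|\leq 2^n a^{\widetilde\ell r}|Q|$, and the qualitative conditions $|Q_s\cap\{a^{k_s}<v\leq a^{k_s+1}\}|>0$ give no evident bound uniform in $\widetilde\ell$. Aiming at $C<1$ by taking $M$ and $a$ large is also unnecessary (any constant suffices for the application) and would constrain the parameter $a$ of the main construction.

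The ingredient you never use is precisely the one the paper's proof runs on: the stopping inequalities \eqref{eq: promedios Luxemburgo de g son como a^k} for $g=|f|v$. Your ordering observation (strict inclusion forces $k>t$) is the paper's first step; after that, $\|g\|_{\Phi,Q_j^k}>a^k$ gives $|Q_j^k|<\int_{Q_j^k}\Phi(g/a^k)$, convexity of $\Phi$ lets one pass from the level $a^k$ to the level $2^na^t$ at the cost of a factor $2^na^{t-k}$, the level-$k$ cubes inside $Q=Q_s^t$ are disjoint and $\|g\|_{\Phi,Q_s^t}\leq 2^na^t$ gives $\int_{Q_s^t}\Phi\bigl(g/(2^na^t)\bigr)\leq|Q_s^t|$, and the geometric series over $k>t$ produces $\sum_{Q'\subsetneq Q}|Q'|\leq \frac{2^n}{a-1}|Q|$, with no use at all of the $A_\infty$ condition on $v^r$. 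The sparsity here comes from the Luxemburg averages of $g$, not from the weight, so the $v^r$-density bookkeeping in your sketch cannot replace this step.
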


\begin{proof}
	We shall first prove that if $Q_j^k\subsetneq Q_s^t$ or $Q_j^k\subsetneq Q_{s,m}^t$ or $Q_{j,i}^k\subsetneq Q_s^t$ or $Q_{j,i}^k\subsetneq Q_{s,m}^t$, then $k>t$. By maximality, for the first case we have
	\[a^t<\|g\|_{\Phi,Q_s^t}\leq a^k,\]
	from where we easily deduce that $k>t$. The second case can be reduced to the first, since $Q_{s,m}^t\subsetneq Q_s^t$.  
	For the third case, %if we assume that $Q_j^k=Q_s^t$, then $k=t$ and this would lead to
	%\[a^{tr}\leq \frac{1}{|Q_s^t|}\int_{Q_s^t}v^r, \quad \textrm{ and also }\quad \frac{1}{|Q_j^k|}\int_{Q_j^k}v^r<a^{kr},\]
	%a contradiction. Thus 
	notice that $Q_j^k\neq Q_s^t$. Therefore, we must have $Q_s^t\subsetneq Q_j^k$ or $Q_j^k\subsetneq Q_s^t$. If the first condition held, we would have $t>k$. Then
	\[\frac{1}{|Q_s^t|}\int_{Q_s^t}v^r\mathcal{X}_{Q_j^k}=\frac{1}{|Q_s^t|}\int_{Q_s^t}v^r\geq a^{tr}>a^{kr},\]
	and $Q_{j,i}^k\subsetneq Q_s^t$. This is absurd because $Q_{j,i}^k$ is a maximal cube that verifies 
	\[\frac{1}{|Q_{j,i}^k|}\int_{Q_{j,i}^k}v^r>a^{kr}.\]  
	Then we must have $Q_j^k\subsetneq Q_s^t$ and this implies that $k>t$. Finally, the forth case follows from the third since $Q_{s,m}^t\subsetneq Q_s^t$. 
	
	With this fact in mind consider a cube in $\Gamma$, say, $Q_s^t$. We want to estimate
	\[\left|\bigcup_{Q\in \Gamma, Q\subsetneq Q_s^t} Q\right|.\]
	Note that if $Q\subsetneq Q_s^t$, then the level of $Q$ is greater than $t$. Therefore
	\[\left|\bigcup_{Q\in \Gamma, Q\subsetneq Q_s^t} Q\right|\leq\sum_{k>t}\sum_j |Q_j^k|.\]
	Since $a^k<\|g\|_{\Phi,Q_j^k}$ we have that
	\[1<\frac{1}{|Q_j^k|}\int_{Q_j^k}\Phi\left(\frac{g}{a^k}\right), \textrm{ or equivalently }|Q_j^k|<\int_{Q_j^k}\Phi\left(\frac{g}{a^k}\right).\]
	On the other hand, since $\|g\|_{\Phi,Q_s^t}\leq 2^na^t$ we have
	\[\frac{1}{|Q_s^t|}\int_{Q_s^t}\Phi\left(\frac{g}{2^na^t}\right)\leq 1, \textrm{ or equivalently }\int_{Q_s^t}\Phi\left(\frac{g}{2^na^t}\right)\leq |Q_s^t|.\]
	By combining these inequalities together with the convexity of $\Phi$ we obtain
	\begin{align*}
	\sum_{k>t}\sum_j |Q_j^k|&<\sum_{k>t}\sum_j\int_{Q_j^k}\Phi\left(\frac{g}{a^k}\right)\\
	&\leq\sum_{k>t}\sum_j2^na^{t-k}\int_{Q_j^k}\Phi\left(\frac{g}{2^na^t}\right)\\
	&\leq 2^n \sum_{k>t}a^{t-k}\int_{Q_s^t}\Phi\left(\frac{g}{2^na^t}\right)\\
	&\leq 2^n|Q_s^t|\sum_{k>t}a^{t-k}\\
	&=\frac{2^n}{a-1}|Q_s^t|.\qedhere
	\end{align*}
\end{proof}

\medskip

\begin{proof}[Proof of Theorem~\ref{teo: teorema principal}]
	We shall write some parts of the proof as claims, which will be proved at the end, for the sake of clearness. Recall that we have to estimate the two quantities
	\[A_N:= \sum_{k\geq N}\sum_{\ell\geq 0} \sum_{Q_j^k\in \Gamma_{\ell,k}}a^{(k+1)r}u(E_k\cap Q_j^k)\]
	and
	\[B_N:=\sum_{k\geq N} \sum_{i:Q_{j,i}^k\in \Gamma_{-1,k}}a^{(k+1)r}u(Q_{j,i}^k)\]
	by $C\int_{\mathbb{R}^n}\Phi\left(|f|\right)uv^r$, with $C$ independent of $N$.
	
	We shall start with the estimate of $A_N$. Fix $\ell\geq 0$ and let $\Delta_\ell=\cup_{k\geq N} \Gamma_{\ell,k}$. We define a sequence of sets recursively as follows: 
	\[P_0^\ell=\{Q: Q \textrm{ is maximal in } \Delta_\ell \textrm{ in the sense of inclusion}\}\]
	and for $m\geq 0$ given we say that $Q_j^k\in P_{m+1}^\ell$ if there exists a cube $Q_s^t$ in $P_m^\ell$ which verifies
	\begin{equation}\label{eq: desigualdad 1 conjunto P_m^l}
	\frac{1}{|Q_j^k|}\int_{Q_j^k} u>\frac{2}{|Q_s^t|}\int_{Q_s^t}u
	\end{equation}
	and it is maximal in this sense, that is,
	\begin{equation}\label{eq: desigualdad 2 conjunto P_m^l}
	\frac{1}{|Q_{j'}^{k'}|}\int_{Q_j^k} u\leq\frac{2}{|Q_s^t|}\int_{Q_s^t}u
	\end{equation}
	for every $Q_j^k\subsetneq Q_{j'}^{k'}\subsetneq Q_s^t$.
	
	Let $P^\ell=\cup_{m\geq 0} P_m^{\ell}$, the set of principal cubes in $\Delta_\ell$. By applying Lemma~\ref{lema: acotacion de u(Ek cap Q_j^k)} and the definition of $\Lambda_{\ell,k}$ we have that
	\begin{align*}
	\sum_{k\geq N}\sum_{\ell\geq 0} \sum_{Q_j^k\in \Gamma_{\ell,k}}a^{(k+1)r}u(E_k\cap Q_j^k)&\leq\sum_{k\geq N}\sum_{\ell\geq 0} \sum_{Q_j^k\in \Gamma_{\ell,k}}c_1a^{(k+1)r}e^{-c_2\ell r}u(Q_j^k)\\
	&\leq \sum_{\ell\geq 0}c_1e^{-c_2\ell r}a^{r(1-\ell)}\sum_{k\geq N}\sum_{Q_j^k\in \Gamma_{\ell,k}}\frac{v^r(Q_j^k)}{|Q_j^k|}u(Q_j^k).
	\end{align*}
	
	Let us sort the inner double sum in a more convenient way. We define
	\[\mathcal{A}_{(t,s)}^\ell=\left\{Q_j^k \in \bigcup_{k\geq N} \Gamma_{\ell,k}: Q_j^k\subseteq Q_s^t \textrm{ and } Q_s^t \textrm{ is the smallest cube in }P^\ell \textrm{ that contains it} \right\}.\]
	That is, every $Q_j^k\in \mathcal{A}_{(t,s)}^\ell$ is not a principal cube, unless $Q_j^k=Q_s^t$. Recall that $v^r\in A_\infty$ implies that there exist two positive constants $C$ and $\varepsilon$ verifying
	\begin{equation}\label{eq: condicion Ainfty de v^r}
	\frac{v^r(E)}{v^r(Q)}\leq C\left(\frac{|E|}{|Q|}\right)^{\varepsilon},
	\end{equation}
	for every cube $Q$ and every measurable set $E$ of $Q$.
	
	By using \eqref{eq: desigualdad 2 conjunto P_m^l} and Lemma~\ref{lema: sparsity de los cubos} we have that
	\begin{align*}
	\sum_{k\geq N}\sum_{Q_j^k\in \Gamma_{\ell,k}}\frac{v^r(Q_j^k)}{|Q_j^k|}u(Q_j^k)&=\sum_{Q_s^t \in P^\ell}\,\,\sum_{(k,j): Q_j^k\in \mathcal{A}_{(t,s)}^\ell}\frac{u(Q_j^k)}{|Q_j^k|}v^r(Q_j^k)\\
	&\leq 2\sum_{Q_s^t \in P^\ell}\frac{u(Q_s^t)}{|Q_s^t|}\,\,\sum_{(k,j): Q_j^k\in \mathcal{A}_{(t,s)}^\ell}v^r(Q_j^k)\\
	&\leq C\sum_{Q_s^t \in P^\ell}\frac{u(Q_s^t)}{|Q_s^t|}v^r(Q_s^t)\left(\frac{\left|\bigcup_{(k,j): Q_j^k\in \mathcal{A}_{(t,s)}^\ell}Q_j^k\right|}{|Q_s^t|}\right)^\varepsilon\\
	&\leq C\sum_{Q_s^t \in P^\ell} \frac{u(Q_s^t)}{|Q_s^t|}v^r(Q_s^t).
	\end{align*}
	
	Therefore,
	\begin{align*}
	\sum_{k\geq N}\sum_{\ell\geq 0} \sum_{Q_j^k\in \Gamma_{\ell,k}}a^{(k+1)r}u(E_k\cap Q_j^k)&\leq C\sum_{\ell\geq 0}e^{-c_2\ell r}a^{-\ell r}\sum_{Q_s^t \in P^\ell} \frac{v^r(Q_s^t)}{|Q_s^t|}u(Q_s^t)\\
	&\leq C\sum_{\ell\geq 0}e^{-c_2\ell r}\sum_{Q_s^t \in P^\ell} a^{tr}u(Q_s^t).
	\end{align*}
	
	\begin{afirmacion}\label{af: control de a^kr por promedios de Phi(f), caso l no negativo}
		Given $\ell\geq 0$ and $Q_j^k\in \bigcup_{k\geq N} \Gamma_{\ell,k}$, there exists a positive constant $C$, independent of $\ell$, such that
		\begin{equation}\label{eq: estimacion de afirmacion: control de akr por promedios de Phi(f). caso l no negativo}
		a^{kr}\leq \frac{C}{|Q_j^k|}\int_{Q_j^k}\Phi\left(|f(x)|\right)v^r(x)\,dx.
		\end{equation}
	\end{afirmacion}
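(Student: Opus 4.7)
The proof starts from the Calderón-Zygmund maximality condition \eqref{eq: promedios Luxemburgo de g son como a^k}: writing $g=|f|v$, the inequality $\|g\|_{\Phi,Q_j^k}>a^k$ translates into
\[|Q_j^k|\;<\;\int_{Q_j^k}\Phi\!\left(\frac{|f|v}{a^k}\right)dx.\]
The goal is to upgrade this to $a^{kr}|Q_j^k|\le C\int_{Q_j^k}\Phi(|f|)v^r$ with $C$ independent of $\ell$.

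The central device is a pointwise splitting inequality for $\Phi\in\mathfrak F_r$. Submultiplicativity gives $\Phi(|f|v/a^k)\le C_\Phi\,\Phi(|f|)\,\Phi(v/a^k)$, and combining the lower-type-$r$ condition (which controls $\Phi$ on the regime $v\le a^k$) with the growth bound $\Phi(t)\le C_0 t^r(\log t)^\delta$ valid for $t\ge t^*$ (which controls $\Phi$ on $v>a^k$) yields
\[\Phi(v/a^k)\;\le\;C\,(v/a^k)^r\bigl(1+\log^+(v/a^k)\bigr)^\delta.\]
Substituting into the maximality bound and extracting the $a^{-kr}$ factor produces
\[a^{kr}|Q_j^k|\;\le\; C\int_{Q_j^k}\Phi(|f|)\,v^r\bigl(1+\log^+(v/a^k)\bigr)^\delta dx.\]

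The main obstacle is the absorption of the residual logarithmic factor \emph{with a constant independent of $\ell$}. A naive pointwise estimate using the cube-average $v^r(Q_j^k)/|Q_j^k|\le a^{(k+\ell+1)r}$ would produce a factor of $(\ell+1)^\delta$, which although summable against $e^{-c_2\ell r}$ in the outer sum would not give the claimed uniform bound. Here I would exploit the full strength of the hypothesis $v^r\in A_\infty$: by the reverse Hölder inequality stated in \S\ref{section: preliminares}, there is $s>1$ with $v^r\in\mathrm{RH}_s$ and a constant depending only on $[v^r]_{A_\infty}$. Combining the elementary bound $(1+\log^+ t)^\delta\le C_\varepsilon(1+t^\varepsilon)$ (valid for any $\varepsilon>0$) with a Hölder inequality adapted to the measure $v^r\,dx$, and choosing $\varepsilon$ small enough that $(r+\varepsilon)/r\le s$, one trades the logarithm for a small power of $v/a^k$ which is reabsorbed via reverse Hölder, uniformly in $\ell$, $k$, $j$. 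This yields
\[\int_{Q_j^k}\Phi(|f|)v^r\bigl(1+\log^+(v/a^k)\bigr)^\delta dx\;\le\; C\int_{Q_j^k}\Phi(|f|)v^r\,dx,\]
with $C=C(r,\delta,[v^r]_{A_\infty},n)$, completing the proof of \eqref{eq: estimacion de afirmacion: control de akr por promedios de Phi(f). caso l no negativo}.
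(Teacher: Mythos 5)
Your first step (splitting $\Phi(|f|v/a^k)\lesssim\Phi(|f|)(v/a^k)^r(1+\log^+(v/a^k))^\delta$ via submultiplicativity, the lower type $r$ and the growth condition) is fine and is essentially the paper's reduction, which splits $Q_j^k$ into $\{v\le t^*a^k\}$ and its complement. The real content of the Claim, however, is the absorption of the logarithm \emph{uniformly in $\ell$}, and there your argument has a genuine gap. The inequality you display at the end, $\int_{Q_j^k}\Phi(|f|)v^r\bigl(1+\log^+(v/a^k)\bigr)^\delta\le C\int_{Q_j^k}\Phi(|f|)v^r$ with $C$ independent of $\ell$, is false in general: membership in $\Gamma_{\ell,k}$ forces the $v^r$-average over $Q_j^k$ to be $\approx a^{(k+\ell)r}$ (plus a positive-measure set where $a^k<v\le a^{k+1}$), but it does not prevent $\Phi(|f|)$ from being concentrated on the part of the cube where $v\approx a^{k+\ell}$, and there the logarithmic factor is $\approx(\ell\log a)^\delta$; so the best constant in that inequality grows like $\ell^\delta$. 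The mechanism you invoke cannot repair this: reverse H\"older controls averages of powers of $v$ \emph{alone}, so to decouple the small power $v^\varepsilon$ (or the log) from $\Phi(|f|)$ you must apply H\"older in the measure $v^r\,dx$, which unavoidably replaces $\Phi(|f|)$ by $[\Phi(|f|)]^\gamma$ with $\gamma>1$; and with a \emph{fixed} $\varepsilon>0$ the quantity $\tfrac{1}{v^r(Q_j^k)}\int_{Q_j^k}(v/a^k)^{\varepsilon}v^r$ that would remain is itself of order $a^{\ell\varepsilon}$, unbounded in $\ell$. So neither half of your proposed device (fixed small $\varepsilon$ plus $\mathrm{RH}_s$) yields uniformity in $\ell$.

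The paper obtains the uniform constant only through a limiting procedure that your sketch omits: H\"older with exponents $\gamma=1+\varepsilon$, $\gamma'$ with respect to $v^r\,dx$ is applied to the whole product $\Phi(|f|)w_k$; the $\gamma'$-average of the logarithm is bounded, via $\log t\le\xi^{-1}t^\xi$, $\mathrm{RH}_s$ and the $\Lambda_{\ell,k}$ average bound, by an expression of the form $\bigl(\delta_0 s'\gamma' a^{(\ell+1)r/s'}[v^r]_{\mathrm{RH}_s}\bigr)^{1/\gamma'}$, and then $\varepsilon$ is chosen \emph{depending on $\ell$} so small that this is at most $(\gamma')^{2/\gamma'}\le e^{2/e}$ (Lemma~\ref{lema: estimacion constante epsilon}); the spurious exponent $\gamma$ on $\Phi(|f|)$ is afterwards removed using the Luxemburg-average identity \eqref{eq: preliminares - equivalencia normal Luxemburo con infimo} with the choice $\tau=1/(2Da^{\ell r})$, and finally $\varepsilon\to0$ (dominated convergence) recovers $\Phi(|f|)$ to the first power with a constant free of $\ell$. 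Without an $\ell$-dependent $\varepsilon$ and this limit, your route gives at best the estimate with an extra $(\ell+1)^\delta$ factor — which, as you note, would still be summable against $e^{-c_2\ell r}$ in the estimate of $A_N$, but it is not the Claim as stated, and your proposed derivation of the uniform version does not go through.
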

	
	Using this claim, we obtain
	\begin{align*}
	\sum_{k\geq N}\sum_{\ell\geq 0} \sum_{Q_j^k\in \Gamma_{\ell,k}}a^{(k+1)r}u(E_k\cap Q_j^k)&\leq C\sum_{\ell\geq 0} e^{-c_2\ell r}\sum_{Q_s^t\in P^{\ell}}\frac{u(Q_s^t)}{|Q_s^t|}\int_{Q_s^t}\Phi\left(|f|\right)v^r\\
	&=C\sum_{\ell\geq 0}e^{-c_2\ell r}\int_{\mathbb{R}^n} \Phi\left(|f(x)|\right)v^r(x)\left(\sum_{Q_s^t\in P^{\ell}}\frac{u(Q_s^t)}{|Q_s^t|}\mathcal{X}_{Q_s^t}(x)\right)\,dx\\
	&=C\sum_{\ell\geq 0}e^{-c_2\ell r}\int_{\mathbb{R}^n} \Phi\left(|f(x)|\right)v^r(x)h_1(x)\,dx
	\end{align*}
	
	\begin{afirmacion}\label{af: control de h_1 por u}
	There exists a positive constant $C$, independent of $\ell$, that satisfies $h_1(x)\leq Cu(x)$.
	\end{afirmacion}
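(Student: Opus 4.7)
The key structural observation is that the principal cubes in $P^\ell$ organize themselves into a tree by generation, and the defining condition \eqref{eq: desigualdad 1 conjunto P_m^l} forces the $u$-averages to grow geometrically as one descends the tree. My plan is to exploit this directly to collapse the sum defining $h_1(x)$.

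First I would check that for each fixed $m\geq 0$, the cubes in $P_m^\ell$ are pairwise disjoint. For $m=0$ this is automatic from maximality with respect to inclusion in $\Delta_\ell$. For $m+1$, each member of $P_{m+1}^\ell$ is strictly contained in some parent in $P_m^\ell$; cubes sharing the same parent are disjoint by the maximality in the defining condition, and cubes with different parents are disjoint because their parents already are. Consequently, for a fixed $x\in\mathbb{R}^n$ there is at most one cube in each $P_m^\ell$ containing $x$; I will call it $Q^{(m)}=Q^{(m)}(x)$ when it exists, and let $M=M(x)$ be the largest generation for which $Q^{(m)}(x)$ exists. Moreover, when $Q^{(m+1)}$ exists it must be strictly contained in some parent from $P_m^\ell$, and that parent must contain $x$, hence coincides with $Q^{(m)}$. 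The doubling condition \eqref{eq: desigualdad 1 conjunto P_m^l} then yields
\[\frac{u(Q^{(m+1)})}{|Q^{(m+1)}|}>2\,\frac{u(Q^{(m)})}{|Q^{(m)}|}\qquad(0\leq m\leq M-1).\]

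With this chain in hand the estimate of $h_1(x)$ becomes a geometric sum. Since $x$ lies in exactly the cubes $Q^{(0)}\supsetneq Q^{(1)}\supsetneq\cdots\supsetneq Q^{(M)}$ among all members of $P^\ell$, I can write
\[h_1(x)=\sum_{m=0}^{M}\frac{u(Q^{(m)})}{|Q^{(m)}|}\leq \frac{u(Q^{(M)})}{|Q^{(M)}|}\sum_{m=0}^{M}2^{m-M}\leq 2\,\frac{u(Q^{(M)})}{|Q^{(M)}|}.\]
The innermost principal cube $Q^{(M)}$ is a dyadic cube containing $x$, so its $u$-average is bounded by $M_{\mathcal{D}}u(x)\leq Mu(x)$.

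Finally, the hypothesis $u\in A_1$ gives $Mu(x)\leq [u]_{A_1}u(x)$ for almost every $x$, and hence
\[h_1(x)\leq 2[u]_{A_1}u(x),\]
which is the desired pointwise bound with $C=2[u]_{A_1}$, independent of $\ell$. I do not anticipate a genuine obstacle here; the main thing to be careful about is verifying the disjointness of each $P_m^\ell$ and that the relevant parent of $Q^{(m+1)}(x)$ really is $Q^{(m)}(x)$, so that the factor $2$ in \eqref{eq: desigualdad 1 conjunto P_m^l} is indeed available at every step of the chain.
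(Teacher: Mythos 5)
Your argument is correct and is essentially the paper's own proof: both enumerate the principal cubes of $P^\ell$ containing $x$ as a nested chain along which the $u$-averages at least double at each step, sum the resulting geometric series to get a factor $2$, and bound the innermost average by $[u]_{A_1}u(x)$. The only point you leave implicit is that the chain is finite for almost every $x$ (so that the largest generation $M(x)$ exists); this follows immediately from the facts you already use, since $2^{m}\,\frac{u(Q^{(0)})}{|Q^{(0)}|}<\frac{u(Q^{(m)})}{|Q^{(m)}|}\leq [u]_{A_1}u(x)<\infty$ for every $m$, which is exactly how the paper handles it.
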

	With this claim at hand, we can obtain 
	\[A_N\leq C \int_{\mathbb{R}^n}\Phi\left(|f(x)|\right)u(x)v^r(x)\,dx,\]
	where $C$ does not depend on $N$.
	
	Let us center our attention on the estimate of $B_N$. Fix $0<\beta<\varepsilon$, where $\varepsilon$ is the number appearing in \eqref{eq: condicion Ainfty de v^r}. We shall build the set of principal cubes in $\Delta_{-1}=\bigcup_{k\geq N}\Gamma_{-1,k}$. Let
	\[P_0^{-1}=\{Q: Q \textrm{ is a maximal cube in }\Delta_{-1}\textrm{ in the sense of inclusion}\}\]
	and, recursively, we say that $Q_{j,i}^k\in P_{m+1}^{-1}$, $m\geq 0$, if there exists a cube $Q_{s,l}^t\in P_m^{-1}$ such that
	\begin{equation}\label{eq: desigualdad 1 conjunto P_m^{-1}}
	\frac{1}{|Q_{j,i}^k|}\int_{Q_{j,i}^k} u> \frac{a^{(k-t)\beta r}}{|Q_{s,l}^t|}\int_{Q_{s,l}^t}u
	\end{equation} 
	and it is the biggest subcube of $Q_{s,l}^t$ that verifies this condition, that is
	\begin{equation}\label{eq: desigualdad 2 conjunto P_m^{-1}}
	\frac{1}{|Q_{j',i'}^{k'}|}\int_{Q_{j',i'}^{k'}} u\leq \frac{a^{(k-t)\beta r}}{|Q_{s,l}^t|}\int_{Q_{s,l}^t}u
	\end{equation} 
	if $Q_{j,i}^k\subsetneq Q_{j',i'}^{k'}\subsetneq Q_{s,l}^t$. Let $P^{-1}=\bigcup_{m\geq 0} P_m^{-1}$, the set of principal cubes in $\Delta_{-1}$. Similarly as before, we define the set
	\[\mathcal{A}_{(t,s,l)}^{-1}=\left\{Q_{j,i}^k \in \bigcup_{k\geq N} \Gamma_{-1,k}: Q_{j,i}^k\subseteq Q_{s,l}^t \textrm{ and } Q_{s,l}^t \textrm{ is the smallest cube in }P^{-1} \textrm{ that contains it} \right\}.\]
	
	We can therefore estimate $B_N$ as follows
	\begin{align*}
	B_N&\leq a^r \sum_{k\geq N}\sum_{i:Q_{j,i}^k\in \Gamma_{-1,k}}\frac{v^r(Q_{j,i}^k)}{|Q_{j,i}^k|}u(Q_{j,i}^k)\\
	&\leq a^r\sum_{Q_{s.l}^t \in P^{-1}}\sum_{k,j,i: Q_{j,i}^k\in \mathcal{A}_{(t,s,l)}^{-1}}\frac{u(Q_{j,i}^k)}{|Q_{j,i}^k|}v^r(Q_{j,i}^k)\\
	&\leq a^r\sum_{Q_{s.l}^t \in P^{-1}}\frac{u(Q_{s,l}^t)}{|Q_{s,l}^t|}\sum_{k\geq t}a^{(k-t)\beta r}\,\sum_{j,i: Q_{j,i}^k\in \mathcal{A}_{(t,s,l)}^{-1}}v^r(Q_{j,i}^k).
	\end{align*}
	
	Fixed $k\geq t$, observe that
	\[\sum_{j,i: Q_{j,i}^k\in \mathcal{A}_{(t,s,l)}^{-1}} |Q_{j,i}^k|<\sum_{j,i: Q_{j,i}^k\in \mathcal{A}_{(t,s,l)}^{-1}} a^{-kr}v^r(Q_{j,i}^k)\leq a^{-kr}v^r(Q_{s,l}^t)\leq 2^na^{(t-k)r}|Q_{s,l}^t|.\]
	Combining this inequality with the $A_\infty$ condition of $v^r$ we have, for every $k\geq t$, that
	\begin{align*}
	\sum_{j,i: Q_{j,i}^k\in \mathcal{A}_{(t,s,l)}^{-1}}a^{(k-t)\beta r}v^r(Q_{j,i}^k)&\leq Cv^r(Q_{s,l}^t)\left(\frac{\sum_{j,i: Q_{j,i}^k\in \mathcal{A}_{(t,s,l)}^{-1}}|Q_{j,i}^k|}{|Q_{s,l}^t|}\right)^\varepsilon\\
	&\leq Ca^{(t-k)r\varepsilon}.
	\end{align*}
	Thus,
	\begin{align*}
	B_N&\leq C\sum_{Q_{s.l}^t \in P^{-1}}\frac{u(Q_{s,l}^t)}{|Q_{s,l}^t|}v^r(Q_{s,l}^t)\sum_{k\geq t} a^{(t-k)r(\varepsilon-\beta)}\\
	&=C\sum_{Q_{s.l}^t \in P^{-1}}\frac{v^r(Q_{s,l}^t)}{|Q_{s,l}^t|}u(Q_{s,l}^t)\\
	&\leq C\sum_{Q_{s.l}^t \in P^{-1}}a^{tr}u(Q_{s,l}^t).
	\end{align*}
	
	\begin{afirmacion}\label{af: control de a^kr por promedios de Phi(f), caso l=-1}
		If $Q_{j}^k\in \Lambda_{-1,k}$ then there exists a positive constant $C$ such that
		\[a^{kr}\leq \frac{C}{|Q_j^k|}\int_{Q_j^k}\Phi\left(|f(x)|\right)v^r(x)\,dx.\]
	\end{afirmacion}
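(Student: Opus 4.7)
[Proof proposal for Claim~\ref{af: control de a^kr por promedios de Phi(f), caso l=-1}]
My plan is to first translate the Luxemburg condition on $Q_j^k$ into an integral inequality for $\Phi(|f|v/a^k)$, then apply the ``subtle H\"older'' splitting advertised in the introduction to convert $\Phi(|f|v)$ into $\Phi(|f|)v^r$ modulo an error, and finally absorb that error using the $\Lambda_{-1,k}$ hypothesis together with the reverse H\"older inequality enjoyed by $v^r\in A_\infty$.

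Since $\|fv\|_{\Phi,Q_j^k}>a^k$ the definition of the Luxemburg norm gives $|Q_j^k|<\int_{Q_j^k}\Phi(|f|v/a^k)\,dx$. Using the submultiplicativity of $\Phi$, the lower type $r$ of $\Phi$ to extract the factor $a^{-kr}$, and the pointwise upper bound $\Phi(t)\lesssim t^r(1+\log^+t)^\delta$ valid since $\Phi\in\mathfrak{F}_r$, one obtains the pointwise estimate
\[
\Phi\!\left(\frac{|f|v}{a^k}\right)\;\lesssim\;a^{-kr}\,\Phi(|f|)\,v^r\,(1+\log^+ v)^\delta.
\]
Multiplying by $a^{kr}$ and integrating over $Q_j^k$, the claim reduces to the weighted bound
\[
\int_{Q_j^k}\Phi(|f|)\,v^r\,(1+\log^+v)^\delta\;\lesssim\;\int_{Q_j^k}\Phi(|f|)\,v^r,
\]
with a constant independent of $k$ and $j$.

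To establish this, I would split $Q_j^k$ dyadically according to the size of $v$: on $\{v\leq a^k\}$ the logarithmic factor can be dominated by a direct bound, while on the layers $E_\ell=\{a^{k+\ell}<v\leq a^{k+\ell+1}\}\cap Q_j^k$ for $\ell\geq 0$ the factor $(1+\log^+v)^\delta$ grows polynomially in $\ell$, but the $\Lambda_{-1,k}$ hypothesis $\tfrac{1}{|Q_j^k|}\!\int v^r<a^{kr}$ combined with the reverse H\"older inequality $v^r\in RH_s$ (for some $s>1$, which is available because $v^r\in A_\infty$) yields by Chebyshev the geometric measure decay $|E_\ell|\lesssim |Q_j^k|\,a^{-\ell rs}$. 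Pairing this geometric decay against the polynomial growth of $(1+\log^+v)^\delta$ and using the $v^r$-mass available on each layer (again bounded via the $\Lambda_{-1,k}$ hypothesis), the series $\sum_\ell$ of the layer contributions converges absolutely, producing a constant independent of $k$.

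The main obstacle is this last absorption step: ensuring that the summation of the layer contributions yields a bound \emph{uniform in $k$}, since both the factor $(1+\log^+v)^\delta$ on $E_\ell$ and the pointwise value of $v^r$ depend on $k+\ell$. This is overcome by choosing the reverse H\"older exponent $s>1$ with sufficient margin over $1$ (guaranteed by $v^r\in A_\infty$) and carefully trading the $v^r$-weight on each layer against the growth of $\Phi(v)$, so that the $k$-dependent pieces cancel and what remains is a geometric series in $\ell$.
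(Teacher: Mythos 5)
Your reduction starts correctly (Luxemburg norm $\|fv\|_{\Phi,Q_j^k}>a^k$ gives $|Q_j^k|<\int_{Q_j^k}\Phi(|f|v/a^k)$, and submultiplicativity plus the growth condition in $\mathfrak{F}_r$ extract $a^{-kr}v^r$ times a logarithmic correction), but two points break the argument. First, the correction you can actually obtain is $(1+\log^+(v/a^k))^\delta$, not $(1+\log^+v)^\delta$: the condition $\Phi(z)/z^r\leq C_0(\log z)^\delta$ must be applied to the argument $z=v/a^k$ on the set where $v>t^*a^k$ (on the complementary set the lower type $r$ gives the clean bound with no logarithm). Since the claim has to hold for all $k\geq N$ with $N\to-\infty$, this distinction is not cosmetic: for $k<0$ the lower type cannot pull out $a^{-kr}$ from $\Phi(v/a^k)$, and any upper type $q>r$ leaves an unbounded factor $a^{k(r-q)}$, so your pointwise inequality does not hold with a constant independent of $k$ when $\delta>0$.

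Second, and more seriously, the absorption step is false as stated: an inequality of the form $\int_{Q_j^k}\Phi(|f|)v^r\bigl(1+\log^+(v/a^k)\bigr)^\delta\lesssim\int_{Q_j^k}\Phi(|f|)v^r$ cannot hold with a constant independent of $f$, because $f$ may be supported entirely in one layer $E_{\ell_0}=\{a^{k+\ell_0}<v\leq a^{k+\ell_0+1}\}\cap Q_j^k$, where the logarithmic factor is comparable to $(\ell_0\log a)^\delta$; the geometric decay of $|E_\ell|$ that you get from Chebyshev and $RH_s$ is irrelevant, since $\Phi(|f|)$ can concentrate all of its mass on that thin layer. This is exactly the obstacle the paper overcomes by decoupling $\Phi(|f|)$ from the logarithmic weight: it applies H\"older's inequality with exponents $\gamma=1+\varepsilon$ and $\gamma'$ with respect to the measure $v^r\,dx$, so the weight $w_k=\bigl(\log(v/a^k)\bigr)^\delta\mathcal{X}_{\{v>t^*a^k\}}$ only enters through $\bigl(\tfrac{1}{v^r(Q_j^k)}\int_{Q_j^k}w_k^{\gamma'}v^r\bigr)^{1/\gamma'}$, which is bounded by $e^{2/e}$ uniformly in $k$ using $\log t\leq\xi^{-1}t^{\xi}$, the reverse H\"older inequality for $v^r$, the $\Lambda_{-1,k}$ bound $|Q_j^k|^{-1}\int_{Q_j^k}v^r<a^{kr}$, and a careful choice of $\varepsilon$ together with Lemma~\ref{lema: estimacion constante epsilon}. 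It then absorbs $a^{kr}$ via \eqref{eq: preliminares - equivalencia normal Luxemburo con infimo} with $\tau=1/(2D)$, and removes the resulting exponent $\gamma$ on $\Phi(|f|)$ by letting $\varepsilon\to0$ with dominated convergence. Without a decoupling of this kind (H\"older in the $f$ variable, or an equivalent device), your layer-by-layer scheme cannot deliver a constant independent of $k$ and $f$.
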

	We can proceed now as follows
	\begin{align*}
	\sum_{k\geq N}\sum_{i:Q_{j,i}^k\in \Gamma_{-1,k}} a^{(k+1)r}u(Q_{j,i}^k)&\leq C\sum_{Q_{s.l}^t \in P^{-1}}a^{tr}u(Q_{s,l}^t)\\
	&\leq C\sum_{Q_{s.l}^t \in P^{-1}}\frac{u(Q_{s,l}^t)}{|Q_s^t|}\int_{Q_s^t}\Phi\left(|f(x)|\right)v^r(x)\,dx\\
	&\leq C\int_{\mathbb{R}^n} \Phi\left(|f(x)|\right)v^r(x)\left[\sum_{Q_{s.l}^t \in P^{-1}}\frac{u(Q_{s,l}^t)}{|Q_s^t|}\mathcal{X}_{Q_s^t}(x)\right]\,dx\\
	&=C\int_{\mathbb{R}^n} \Phi\left(|f(x)|\right)v^r(x)h_2(x)\,dx.
	\end{align*}
	
	\begin{afirmacion}\label{af: control de h_2 por u}
		There exists a positive constant $C$, independent of $N$, that verifies $h_2(x)\leq Cu(x)$, for almost every $x$.
	\end{afirmacion}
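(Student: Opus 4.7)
The plan is to bound $h_2$ pointwise by $Cu$, following the principal-cubes template used (presumably) for Claim~\ref{af: control de h_1 por u} but adapted to the subtle feature that in $h_2$ every CZ subcube $Q_{s,l}^t\in P^{-1}$ is paired with the characteristic of the \emph{larger} parent $Q_s^t$.

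Fix $x$ and split the sum depending on whether $x$ lies inside the principal cube or only inside its parent:
\[
h_2(x)=I_A(x)+I_B(x),\qquad
I_A(x)=\sum_{\substack{Q_{s,l}^t\in P^{-1}\\ x\in Q_{s,l}^t}}\frac{u(Q_{s,l}^t)}{|Q_s^t|},\quad
I_B(x)=\sum_{\substack{Q_{s,l}^t\in P^{-1}\\ x\in Q_s^t\setminus Q_{s,l}^t}}\frac{u(Q_{s,l}^t)}{|Q_s^t|}.
\]

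For $I_A$ I would use that the principal cubes $R\in P^{-1}$ containing $x$ form a nested chain $R_1\supsetneq R_2\supsetneq\cdots$ which is a path in the principal tree. Combining \eqref{eq: desigualdad 1 conjunto P_m^{-1}} with the fact (already exploited inside Lemma~\ref{lema: sparsity de los cubos}) that the parent-level index strictly increases by at least $1$ between consecutive principal cubes, one gets $\tfrac{u(R_{m+1})}{|R_{m+1}|}>a^{\beta r}\tfrac{u(R_m)}{|R_m|}$. Since $|Q_s^t|\geq|Q_{s,l}^t|$ each summand is dominated by $u(R)/|R|$, so the sum is a geometric series controlled by its deepest term, which by $u\in A_1$ is $\leq[u]_{A_1}u(x)$ almost everywhere; hence $I_A(x)\leq Cu(x)$.

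For $I_B$ I would group orphan cubes sharing a common parent. Since the CZ subcubes of a given parent are pairwise disjoint,
\[
I_B(x)\leq \sum_{Q_s^t\in\mathcal F(x)}\frac{u(Q_s^t)}{|Q_s^t|},
\]
where $\mathcal F(x)$ consists of cubes in $\bigcup_t\Lambda_{-1,t}$ that contain $x$ and admit at least one CZ descendant in $P^{-1}$. As at most one cube per dyadic scale contains $x$, $\mathcal F(x)$ is itself a nested chain. The main obstacle is the absence of a direct geometric decay for $u(Q_s^t)/|Q_s^t|$ along this chain; to overcome it I would associate each $Q_s^t\in\mathcal F(x)$ with a principal descendant $Q_{s,l}^t\in P^{-1}$ and transport the stopping-time gain of \eqref{eq: desigualdad 1 conjunto P_m^{-1}} up to the parent by means of the $A_\infty$ condition \eqref{eq: condicion Ainfty de v^r} on $v^r$, mimicking exactly the telescoping that already reduced $B_N$ to $C\sum a^{tr}u(Q_{s,l}^t)$. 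This yields a summable geometric series whose largest term is again bounded by $[u]_{A_1}u(x)$, so that $I_B(x)\leq Cu(x)$ a.e. Adding the two estimates completes the claim, and $C$ plainly does not depend on $N$, closing the proof of Theorem~\ref{teo: teorema principal}.
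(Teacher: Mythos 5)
Your splitting of $h_2$ into $I_A$ (principal cubes containing $x$) and $I_B$ (principal cubes whose parent contains $x$) is reasonable, and the treatment of $I_A$ is essentially sound: the principal cubes of $P^{-1}$ containing $x$ form a finite nested chain with geometrically increasing $u$-averages (to get the ratio $a^{\beta r}$ between consecutive cubes of the chain you need \eqref{eq: desigualdad 2 conjunto P_m^{-1}} in addition to \eqref{eq: desigualdad 1 conjunto P_m^{-1}}, since the smaller cube may have been stopped against an ancestor rather than its immediate predecessor in the chain, but this is a routine repair). The genuine gap is in $I_B$, which is the substantive part of the claim. The step $I_B(x)\leq\sum_{Q_s^t\in\mathcal{F}(x)}u(Q_s^t)/|Q_s^t|$ discards exactly the information that makes the claim true: the parent cubes $Q^t\ni x$ carry no stopping-time structure, so their $u$-averages have no geometric decay along the chain. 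For instance, if $u\equiv 1$ every term of that sum equals $1$, while the number of admissible levels $t$ with $x\in Q^t$ grows without bound as $N\to-\infty$; hence this intermediate quantity cannot be bounded by $Cu(x)$ uniformly in $N$, and the asserted ``summable geometric series whose largest term is $\leq[u]_{A_1}u(x)$'' does not exist at this level of generality. The proposed repair (``transport the stopping-time gain up to the parent via the $A_\infty$ condition of $v^r$'') is not an argument, and it points to the wrong weight: the $A_\infty$ property of $v^r$ was already spent in reducing $B_N$ to $C\sum a^{tr}u(Q_{s,l}^t)$ and plays no role in this claim.

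What is needed, and what the paper does, is a \emph{second} stopping time on the parents rather than a decay estimate for them. With $G=\{t:\ x\in Q^t\}$ one selects levels $t_0<t_1<\dots<t_M$ by the doubling rule \eqref{eq: afirmacion - control de h_2 por u - eq1}; the $A_1$ condition of $u$ makes this sequence finite and makes the selected averages $|Q^{t_m}|^{-1}\int_{Q^{t_m}}u$ sum geometrically to $C[u]_{A_1}u(x)$. The heart of the proof is then the block estimate \eqref{eq: afirmacion - control de h_2 por u - eq3}: for $t_m\leq t<t_{m+1}$, the stopping inequalities \eqref{eq: desigualdad 1 conjunto P_m^{-1}} and \eqref{eq: desigualdad 2 conjunto P_m^{-1}}, together with the comparability \eqref{eq: afirmacion - control de h_2 por u - eq2} of the parent averages inside the block and the $A_1$ condition, force $u(y)>\lambda\approx a^{(t-t_m)r\beta}|Q^t|^{-1}\int_{Q^t}u$ for a.e.\ $y$ in every principal cube $Q_{s,l}^t\subseteq Q^t$; hence these cubes lie in the level set $\{y\in Q^t: u(y)>\lambda\}$, whose $u$-measure is at most $C\bigl(2[u]_{A_1}^2a^{(t_m-t)r\beta}\bigr)^{\nu}u(Q^t)$ by Chebyshev and the $A_\infty$ condition of $u$ (not of $v^r$). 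Summing this geometric series in $t$ and then over the blocks yields $h_2(x)\leq Cu(x)$ with $C$ independent of $N$. Without some version of this mechanism — smallness of the total $u$-mass of the principal children relative to $u(Q^t)$, quantified through a stopping time on the parents — your estimate of $I_B$ does not close.
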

	
	This claim allows to obtain the desired estimate for $B_N$. This completes the proof.\qedhere
\end{proof}

\medskip

In order to conclude, we give the proof of the claims. 

\medskip

\begin{proof}[Proof of Claim~\ref{af: control de a^kr por promedios de Phi(f), caso l no negativo}]
Fix $\ell\geq 0$ and a cube $Q_j^k\in \bigcup_{k\geq N} \Gamma_{\ell,k}$. We know that $\|g\|_{\Phi,Q_j^k}>a^k$ or,  equivalently, $\left\|\frac{g}{a^k}\right\|_{\Phi, Q_j^k}>1$. Denote with $A=\{x\in Q_j^k: v(x)\leq t^*a^k\}$ and $B=Q_j^k\backslash A$, where $t^*$ is the number verifying that if $z\geq t^*$, then
\[\frac{\Phi(z)}{z^r}\leq C_0 \left(\log z\right)^\delta.\]
Then,
\[1<\left\|\frac{g}{a^k}\right\|_{\Phi, Q_j^k}\leq \left\|\frac{g}{a^k}\mathcal{X}_A\right\|_{\Phi, Q_j^k}+\left\|\frac{g}{a^k}\mathcal{X}_B\right\|_{\Phi, Q_j^k}=I+II.\]
This inequality implies that either $I>1/2$ or $II>1/2$. If the first case holds, since $\Phi\in \mathfrak{F}_r$ we have that
\begin{align*}
1&<\frac{1}{|Q_j^k|}\int_A\Phi\left(\frac{2|f|v}{a^k}\right)\\
&\leq \frac{\Phi(2t^*)}{|Q_j^k|}\int_A \Phi\left(|f|\right)\left(\frac{v}{t^* a^k}\right)^r\\
&\leq \frac{C}{a^{kr}}\frac{1}{|Q_j^k|}\int_{Q_j^k}\Phi\left(|f|\right)v^r,
\end{align*}
and from here we can obtain
\[a^{kr}<\frac{C}{|Q_j^k|}\int_{Q_j^k}\Phi\left(|f|\right)v^r.\]
On the other hand, if $II>1/2$ then again 
\begin{align*}
1&<\frac{1}{|Q_j^k|}\int_B\Phi\left(\frac{2|f|v}{a^k}\right)\\
&\leq \Phi(2)C_0\frac{1}{|Q_j^k|}\int_B \Phi\left(|f|\right)\frac{v^r}{a^{kr}}\left(\log\left(\frac{v}{a^k}\right)\right)^{\delta}\mathcal{X}_{B},
\end{align*}
since $\Phi\in \mathfrak{F}_r$. This implies that
\[a^{kr}\leq \frac{\Phi(2)C_0}{|Q_j^k|}\int_{Q_j^k}\Phi\left(|f|\right)v^rw_k,\]
where $w_k(x)=\left(\log\left(\frac{v(x)}{a^k}\right)\right)^{\delta}\mathcal{X}_{B}(x)$. 

Since $v^r\in A_\infty$, there exists $s>1$ such that $v^r\in \textrm{RH}_s$. Let $\delta_0=\max\{\delta/r,1\}$ and fix $0<\varepsilon<\varepsilon_0$, where
\[\varepsilon_0\leq\min\left\{\frac{1}{\delta_0 s'a^{(\ell+1)r/s'}[v^r]_{\textrm{RH}_s}-1}, \frac{\log 2}{\log (2\Phi(2)C_0e^{2/e}a^{\ell r})}\right\}.\]
Now we define $\gamma=1+\varepsilon$, therefore $\gamma'=1+1/\varepsilon$. By applying H\"{o}lder's inequality with $\gamma$ and $\gamma'$ with respect to the measure $d\mu(x)=v^r(x)\,dx$, we get
\begin{equation}\label{eq: afirmacion - control de a^kr por promedios de Phi(f), caso l no negativo - eq1}
a^{kr}<\Phi(2)C_0\left(\frac{v^r(Q_j^k)}{|Q_j^k|}\right)\left(\frac{1}{v^r(Q_j^k)}\int_{Q_j^k} \left[\Phi\left(|f|\right)\right]^{\gamma}v^r\right)^{1/\gamma}\left(\frac{1}{v^r(Q_j^k)}\int_{Q_j^k}w_k^{\gamma'}v^r\right)^{1/\gamma'}.
\end{equation}
Let us analyze the third factor. From the well-known fact that $\log t \leq \xi^{-1}t^{\xi}$ for every $t,\xi>0$ and applying H\"{o}lder's inequality with $s$ and $s'$ we have
\begin{align*}
\left(\frac{1}{v^r(Q_j^k)}\int_{Q_j^k}w_k^{\gamma'}v^r\right)^{1/\gamma'}&=\left(\frac{1}{v^r(Q_j^k)}\int_{Q_j^k}\left(\log\left(\frac{v}{a^k}\right)\right)^{\delta\gamma'}v^r\right)^{1/\gamma'}\\
&\leq \left(\frac{1}{v^r(Q_j^k)}\int_{Q_j^k\cap B}\frac{\delta s'\gamma'}{r}\left(\frac{v}{a^k}\right)^{r/s'}v^r\right)^{1/\gamma'}\\
&\leq \left(\delta_0 s' \gamma'\frac{|Q_j^k|}{v^r(Q_j^k)}\right)^{1/\gamma'}\left[\left(\frac{1}{|Q_j^k|}\int_{Q_j^k}\frac{v^r}{a^{kr}}\right)^{1/s'}\left(\frac{1}{|Q_j^k|}\int_{Q_j^k}v^{rs}\right)^{1/s}\right]^{1/\gamma'}\\
&\leq \left[\delta_0 s'\gamma' \frac{|Q_j^k|}{v^r(Q_j^k)} a^{(\ell+1)r/s'}[v^r]_{\textrm{RH}_s} \frac{v^r(Q_j^k)}{|Q_j^k|}\right]^{1/\gamma'}\\
&=\left[\delta_0 s'\gamma' a^{(\ell+1)r/s'}[v^r]_{\textrm{RH}_s} \right]^{1/\gamma'}\\
&\leq \left(\gamma'\right)^{2/\gamma'}\\
&\leq e^{2/e},
\end{align*}
by virtue of the election for $\varepsilon$ and Lemma~\ref{lema: estimacion constante epsilon}.

%	&\leq [v^r]_{\textrm{RH}_s}a^{(1+1/s')r/\gamma'}\left(\delta_0 s' \gamma'\frac{|Q_j^k|}{v^r(Q_j^k)}\right)^{1/\gamma'}\left[a^{\ell r/s'}a^{(k+\ell)r}\right]^{1/\gamma'}\\
%	&\leq [v^r]_{\textrm{RH}_s}a^{2r}\left(\delta_0 s' \gamma'a^{-(k+\ell)r}\right)^{1/\gamma'}\left[a^{\ell r/s'}a^{(k+\ell)r}\right]^{1/\gamma'}\\
%	&\leq [v^r]_{\textrm{RH}_s}a^{2r}\left(\delta_0 s' \gamma'a^{\ell r/s'}\right)^{1/\gamma'}\\
%	&\leq [v^r]_{\textrm{RH}_s}a^{2r}\left(\gamma'\right)^{2/\gamma'}\\
%	&\leq [v^r]_{\textrm{RH}_s}a^{2r}e^{2/e},

Returning to \eqref{eq: afirmacion - control de a^kr por promedios de Phi(f), caso l no negativo - eq1} we have that
\[a^{kr}\leq D\left(\frac{v^r(Q_j^k)}{|Q_j^k|}\right)\left(\frac{1}{v^r(Q_j^k)}\int_{Q_j^k} \left[\Phi\left(|f|\right)\right]^{\gamma}v^r\right)^{1/\gamma},\]
where $D=\Phi(2)C_0e^{2/e}$.
By denoting $\Psi(t)=t^{\gamma}$, we have that the second factor is $\|\Phi(f)\|_{\Psi, v^r, Q_j^k}$. Using \eqref{eq: preliminares - equivalencia normal Luxemburo con infimo} we have that for every $\tau>0$
\begin{align*}
a^{kr}&\leq D\frac{v^r(Q_j^k)}{|Q_j^k|}\left\{\tau+\frac{\tau^{1-\gamma}}{v^r(Q_j^k)}\int_{Q_j^k} \left[\Phi\left(|f|\right)\right]^\gamma v^r\right\}\\
&\leq D\tau a^{(k+\ell)r}+D\frac{\tau^{1-\gamma}}{|Q_j^k|}\int_{Q_j^k} \left[\Phi\left(|f|\right)\right]^\gamma v^r.
\end{align*}
Pick $\tau=1/(2Da^{\ell r})$ and observe that with this choice
\[D\tau a^{(k+\ell)r}=\frac{a^{kr}}{2},\quad \textrm{ and }\quad \tau^{1-\gamma}=(2Da^{\ell r})^{\varepsilon}\leq 2,\]
by virtue of the definition of $\varepsilon$. Thus,
\[a^{kr}\leq \frac{4D}{|Q_j^k|}\int_{Q_j^k}\left[\Phi\left(|f|\right)\right]^\gamma v^r,\]
for every $0<\varepsilon<\varepsilon_0$, with $D$ independent of $\varepsilon$. The dominate convergence theorem allows to conclude the thesis by letting $\varepsilon\to 0$. \qedhere
\end{proof}

\medskip

\begin{proof}[Proof of Claim~\ref{af: control de h_1 por u}]
Fix $\ell\geq 0$ and $x\in \mathbb{R}^n$ such that $u(x)<\infty$. We shall consider a sequence of nested principal cubes in $\Delta_\ell$ that contain $x$. Let $Q^{(0)}$ be the maximal cube (in the sense of inclusion) in $P^{\ell}$ that contains $x$. In general, given $Q^{(j)}$ we denote with $Q^{(j+1)}$ the maximal principal cube in $Q^{(j)}$ that contains $x$. This so-defined sequence has only a finite number of terms. If not, for every $j$ we would have
\[\frac{1}{|Q^{(0)}|}\int_{Q^{(0)}}u\leq \frac{1}{2^{j}}\frac{1}{|Q^{(j)}|}\int_{Q^{(j)}}u\leq \frac{[u]_{A_1}}{2^j}u(x),\]
or equivalently
\[\frac{2^j}{|Q^{(0)}|}\int_{Q^{(0)}}u\leq [u]_{A_1}u(x),\]
and we would get a contradiction by letting $j\to \infty$. Therefore, $x$ can only belong to a finite number $J=J(x)$ of these cubes. Thus,
\begin{align*}
\sum_{Q_s^t\in P^{\ell}} \frac{u(Q_s^t)}{|Q_s^t|}\mathcal{X}_{Q_s^t}(x)&\leq \sum_{j=0}^J \frac{1}{|Q^{(j)}|}\int_{Q^{(j)}}u\\
&\leq \sum_{j=0}^J 2^{j-J}\frac{1}{|Q^{(J)}|}\int_{Q^{(J)}}u\\
&\leq [u]_{A_1}u(x) \sum_{j=0}^J 2^{j-J}\\
&\leq [u]_{A_1}u(x) 2^{-J}(2^{J+1}-1)\\
&\leq 2[u]_{A_1}u(x).\qedhere
\end{align*}
\end{proof}

\medskip

\begin{proof}[Proof of Claim~\ref{af: control de a^kr por promedios de Phi(f), caso l=-1}]
This proof is similar to the given for Claim~\ref{af: control de a^kr por promedios de Phi(f), caso l no negativo}, with some obvious changes since the average of $v^r$ over $Q_j^k$ is not equivalent to $a^{(\ell+k)r}$. Following the same notation as in Claim~\ref{af: control de a^kr por promedios de Phi(f), caso l no negativo}, since $\left\|\frac{g}{a^k}\right\|_{\Phi, Q_j^k}>1$, we have that either $I>1/2$ or $II>1/2$. If $I>1/2$, we obtain the thesis exactly in the same way as in this claim. On the other hand, if $II>1/2$, we have that
\[a^{kr}\leq \frac{C}{|Q_j^k|}\int_{Q_j^k}\Phi\left(|f|\right)v^rw_k,\]
where $w_k=\left(\log\left(\frac{v}{a^k}\right)\right)^{\delta}\mathcal{X}_{B}$. 

Fix $0<\varepsilon<\varepsilon_0$, where 
\[\varepsilon_0\leq \frac{1}{[v^r]_{\textrm{RH}_s}\delta_0 s'-1}\]
and set $\gamma=1+\varepsilon$. We apply H\"{o}lder's inequality with $\gamma$ and $\gamma'$ with respect to $v^r$ to obtain
\begin{equation}\label{eq: afirmacion - control de a^kr por promedios de Phi(f), caso l=-1 - eq1}
a^{kr}\leq C\frac{v^r(Q_j^k)}{|Q_j^k|}\left(\frac{1}{v^r(Q_j^k)}\int_{Q_j^k}\left[\Phi\left(|f|\right)\right]^\gamma v^r\right)^{1/\gamma}\left(\frac{1}{v^r(Q_j^k)}\int_{Q_j^k}\left(\log\left(\frac{v}{a^k}\right)\right)^{\delta \gamma'} v^r\mathcal{X}_{B}\right)^{1/\gamma'}
\end{equation}	
Recall that $Q_j^k$ satisfies $|Q_j^k|^{-1}\int_{Q_j^k} v^r<a^{kr}$. Thus, we can estimate the third factor as follows
\begin{align*}
\left(\frac{1}{v^r(Q_j^k)}\int_{B}\left(\log\left(\frac{v}{a^k}\right)\right)^{\delta \gamma'} v^r\right)^{1/\gamma'}&\leq \left(\frac{1}{v^r(Q_j^k)}\int_{B}\frac{\delta s'\gamma'}{r}\left(\frac{v}{a^k}\right)^{r/s'}v^r\right)^{1/\gamma'}\\
&\leq \left[\frac{\delta_0 s' \gamma'|Q_j^k|}{v^r(Q_j^k)}\left(\frac{1}{|Q_j^k|}\int_{Q_j^k}\frac{v^r}{a^{kr}}\right)^{1/s'}\left(\frac{1}{|Q_j^k|}\int_{Q_j^k}v^{rs}\right)^{1/s}\right]^{1/\gamma'}\\
&\leq \left([v^r]_{\textrm{RH}_s}\delta_0 s' \gamma'\right)^{1/\gamma'}\\
&\leq \left(\gamma'\right)^{2/\gamma'}\\
&\leq e^{2/e},
\end{align*}
from the choice for $\varepsilon$ and Lemma~\ref{lema: estimacion constante epsilon}. Returning to \eqref{eq: afirmacion - control de a^kr por promedios de Phi(f), caso l=-1 - eq1} we obtain
\[a^{kr}\leq D\frac{v^r(Q_j^k)}{|Q_j^k|}\left(\frac{1}{v^r(Q_j^k)}\int_{Q_j^k}\left[\Phi\left(|f|\right)\right]^\gamma v^r\right)^{1/\gamma},\]
with $D=Ce^{2/e}$. Similarly as we did in the proof of Claim~\ref{af: control de a^kr por promedios de Phi(f), caso l no negativo} we can conclude that
\begin{align*}
a^{kr}&\leq D\frac{v^r(Q_j^k)}{|Q_j^k|}\left\{\tau+\frac{\tau^{1-\gamma}}{v^r(Q_j^k)}\int_{Q_j^k} \left[\Phi\left(|f|\right)\right]^\gamma v^r\right\}\\
&\leq D\tau a^{kr}+D\frac{\tau^{1-\gamma}}{|Q_j^k|}\int_{Q_j^k} \left[\Phi\left(|f|\right)\right]^\gamma v^r,
\end{align*}
for every $\tau>0$. Picking $\tau=1/(2D)$ we get
\[D\tau a^{kr}=\frac{a^{kr}}{2},\quad\textrm{ y }\quad \tau^{1-\gamma}=(2D)^{\varepsilon}\leq 2D.\]
Therefore,
\[a^{kr}\leq \frac{4D^2}{|Q_j^k|}\int_{Q_j^k}\left[\Phi\left(|f|\right)\right]^\gamma v^r,\]
for every $0<\varepsilon<\varepsilon_0$. Again, the constant $D$ does not depend on $\varepsilon$. Letting $\varepsilon\to 0$ we obtain the thesis.
\end{proof}

\medskip

\begin{proof}[Proof of Claim~\ref{af: control de h_2 por u}]
Let us fix $x\in \mathbb{R}^n$ and assume that $u(x)<\infty$. For every level $t$, there exists at most one cube $Q_s^t$ such that $x\in Q_s^t$. If this cube does exist, we denoted it by $Q^t$. Let $G=\{t: x\in Q^t\}$. Since $t\geq N$, $G$ is bounded from below. Then there exists $t_0$, the minimum of $G$. We shall build a sequence of elements in $G$ recursively: having chosen $t_m$, with $m\geq 0$, we pick $t_{m+1}$ as the smallest element in $G$ greater than $t_m$ and that verifies
\begin{equation}\label{eq: afirmacion - control de h_2 por u - eq1}
\frac{1}{|Q^{t_{m+1}}|}\int_{Q^{t_{m+1}}}u> \frac{2}{|Q^{t_m}|}\int_{Q^{t_m}}u.
\end{equation}
Observe that if $t\in G$ y $t_m\leq t< t_{m+1}$, then
\begin{equation}\label{eq: afirmacion - control de h_2 por u - eq2}
\frac{1}{|Q^{t}|}\int_{Q^{t}}u\leq \frac{2}{|Q^{t_m}|}\int_{Q^{t_m}}u.
\end{equation}
This sequence has only a finite number of terms. Indeed, if it was not the case, we would have
\[[u]_{A_1}u(x)\geq \frac{1}{|Q^{t_m}|}\int_{Q^{t_m}}u>\frac{2^m}{|Q^{t_0}|}\int_{Q^{t_0}}u\]
for every $m\geq 0$. By letting $m\to \infty$ we would arrive to a contradiction. Then $\{t_m\}=\{t_m\}_{m=0}^{M}$.
Denoting $\mathcal{F}_m=\{t\in G: t_m\leq t< t_{m+1}\}$, and using \eqref{eq: afirmacion - control de h_2 por u - eq2} we can write
\[h_2(x)=\sum_{Q_{s,l}^t\in P^{-1}} \frac{u(Q_{s,l}^t)}{|Q_s^t|}\mathcal{X}_{Q_s^t}(x)\leq \sum_{m=0}^M\left( \frac{2}{|Q^{t_m}|}\int_{Q^{t_m}}u\right)\sum_{t\in \mathcal{F}_m}\,\sum_{s,l: Q_{s,l}^t\in P^{-1}}\frac{u(Q_{s,l}^t)}{u(Q^t)}.\]
We shall prove that there exists a positive constant $C$, independent of $m$, such that
\begin{equation}\label{eq: afirmacion - control de h_2 por u - eq3}
\sum_{t\in \mathcal{F}_m}\,\sum_{s,l: Q_{s,l}^t\in P^{-1}}\frac{u(Q_{s,l}^t)}{u(Q^t)}\leq C.
\end{equation}
If this inequality holds, we get that
\begin{align*}
h_2(x)&\leq 2C\sum_{m=0}^M \frac{1}{|Q^{t_m}|}\int_{Q^{t_m}}u\\
&\leq 2C\sum_{m=0}^M 2^{m-M}\frac{1}{|Q^{t_M}|}\int_{Q^{t_M}}u\\
&\leq 2C[u]_{A_1}u(x)2^{-M}\sum_{m=0}^M 2^{m}\\
&\leq 4C[u]_{A_1}u(x),
\end{align*}
which completes the proof of the claim. To finish, let us prove \eqref{eq: afirmacion - control de h_2 por u - eq3}.

Fix $0\leq m\leq M$ and observe that if $t=t_0$, then
\[\sum_{s,l: Q_{s,l}^t\in P^{-1}}\frac{u(Q_{s,l}^t)}{u(Q^t)}\leq 1.\]
Let $t_m<t<t_{m+1}$. If $Q_{j,i}^{t_m}\cap Q_{s,l}^t\neq \emptyset$, then we must have $Q_{s,l}^t\subsetneq Q_{j,i}^{t_m}$,  otherwise we would have $k_m>t$, a contradiction. Let $Q_{j',i'}^{t'}$ the smallest principal cube that contains $Q_{j,i}^{t_m}$ (this cube does exist because we are assuming $t>t_0$). By applying \eqref{eq: desigualdad 1 conjunto P_m^{-1}} and \eqref{eq: desigualdad 2 conjunto P_m^{-1}} we conclude that
\[\frac{1}{|Q_{s,l}^t|}\int_{Q_{s,l}^t}u>\frac{a^{(t-t')r\beta}}{|Q_{j',i'}^{t'}|}\int_{Q_{j',i'}^{t'}}u\]
and also
\[\frac{1}{|Q_{j,i}^{t_m}|}\int_{Q_{j,i}^{t_m}}u\leq \frac{a^{(t_m-t')r\beta}}{|Q_{j',i'}^{t'}|}\int_{Q_{j',i'}^{t'}}u.\]
Combining these two estimates with \eqref{eq: afirmacion - control de h_2 por u - eq2} we obtain that, for almost every $y\in Q_{s,l}^t$ 
\begin{align*}
u(y)[u]_{A_1}&\geq \frac{1}{|Q_{s,l}^t|}\int_{Q_{s,l}^t}u>\frac{a^{(t-t_m)r\beta}}{|Q_{j,i}^{t_m}|}\int_{Q_{j,i}^{t_m}}u\\
&\geq a^{(t-t_m)r\beta}\inf_{Q_{j,i}^{t_m}}u \geq a^{(t-t_m)r\beta}\inf_{Q^{t_m}}u\\
&\geq \frac{a^{(t-t_m)r\beta}}{[u]_{A_1}}\frac{1}{|Q^{t_m}|}\int_{Q^{t_m}}u\\
&\geq \frac{a^{(t-t_m)r\beta}}{2[u]_{A_1}}\frac{1}{|Q^{t}|}\int_{Q^{t}}u.
\end{align*}  
Then
\[u(y)>\frac{a^{(t-t_m)r\beta}}{2[u]_{A_1}^2}\frac{1}{|Q^{t}|}\int_{Q^{t}}u=:\lambda.\]
Since $u\in A_1\subseteq A_\infty$, there exist two positive constants $C$ and $\nu$ for which the inequality
\[\frac{u(E)}{u(Q)}\leq C\left(\frac{|E|}{|Q|}\right)^\nu,\]
holds for every cube $Q$ and every measurable subset $E$ of $Q$. Therefore,
\begin{align*}
\sum_{s,l: Q_{s,l}^t\in P^{-1}}u(Q_{s,l}^t)&\leq \frac{u(\{y\in Q^t: u(y)>\lambda\})}{u(Q^t)}u(Q^t)\\
&\leq C\left(\frac{|\{y\in Q^t: u(y)>\lambda\}|}{|Q^t|}\right)^\nu u(Q^t)\\
&\leq Cu(Q^t)\left(\frac{1}{\lambda |Q^t|}\int_{Q^t}u\right)^\nu\\
&=C\left(2[u]_{A_1}^2a^{(t_m-t)r\beta}\right)^\nu u(Q^t).
\end{align*}

If $t=t_m$, we have $Q_{s,l}^t=Q_{j,i}^{t_m}$ and in this case
\begin{align*}
u(y)[u]_{A_1}&\geq \frac{1}{|Q_{s,l}^t|}\int_{Q_{s,l}^t}u\geq \inf_{Q_{j,i}^{t_m}}u\\ 
&\geq \inf_{Q^{t_m}}u\geq \frac{1}{[u]_{A_1}|Q^{t_m}|}\int_{Q^{t_m}}u\\
&\geq \frac{1}{2[u]_{A_1}|Q^{t}|}\int_{Q^{t}}u,
\end{align*}
which is the corresponding estimate obtained above, with $t=t_m$. Thus,
\begin{align*}
\sum_{t\in \mathcal{F}_m}\sum_{s,l: Q_{s,l}^t\in P^{-1}} \frac{u(Q_{s,l}^t)}{u(Q^t)}&\leq \sum_{t\geq t_m} \left(2[u]_{A_1}a^{(t_m-t)r\beta}\right)^\nu\\
&\leq C\sum_{t\geq t_m} a^{(t_m-t)r\beta \nu}\\
&=C,
\end{align*}
which proves \eqref{eq: afirmacion - control de h_2 por u - eq3}. \qedhere
\end{proof}

In order to prove Corollary~\ref{coro: corolario del teorema principal} we need the following result. A proof can be found in \cite{Berra-Carena-Pradolini(MN)}.

\begin{lema}\label{lema - implicacion de tipo debil para interpolacion modular}
	Let $\mu$ be a measure, $T$ a sub-additive operator, and $\varphi$ a Young function. Assume that
	\[\mu(\{x: |Tf(x)|>t\})\leq C\int_{\mathbb{R}^n}\varphi\left(\frac{c|f(x)|}{t}\right)\,d\mu(x),\]
	for some positive constants $C$ and $c$, and every $t>0$. Also assume that $\left\|Tf\right\|_{L^\infty(\mu)}\leq C_0\left\|f\right\|_{L^\infty(\mu)}$. Then
	\[\mu\left(\left\{x: |Tf(x)|>t\right\}\right)\leq C\int_{\{x: |f(x)|>t/(2C_0)\}}\varphi\left(\frac{2c|f(x)|}{t}\right)\,d\mu(x).\]
\end{lema}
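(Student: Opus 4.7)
The plan is to exploit the $L^\infty(\mu)$-boundedness of $T$ to truncate the portion of $f$ that is small (below the threshold $t/(2C_0)$) and apply the modular weak-type hypothesis only to the remainder. Concretely, I would split $f = f_1+f_2$, where
\[
f_1 = f\,\mathcal{X}_{\{|f|>t/(2C_0)\}}, \qquad f_2 = f\,\mathcal{X}_{\{|f|\leq t/(2C_0)\}},
\]
so that $\|f_2\|_{L^\infty(\mu)} \leq t/(2C_0)$.

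Next, I would use sub-additivity of $T$: $|Tf|\leq |Tf_1|+|Tf_2|$. By the $L^\infty(\mu)$ bound applied to $f_2$, one has $\|Tf_2\|_{L^\infty(\mu)}\leq C_0\|f_2\|_{L^\infty(\mu)}\leq t/2$. Consequently, for $\mu$-a.e.\ $x$ with $|Tf(x)|>t$ we must have $|Tf_1(x)|>t/2$, i.e.
\[
\mu\bigl(\{x:|Tf(x)|>t\}\bigr) \leq \mu\bigl(\{x:|Tf_1(x)|>t/2\}\bigr).
\]

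Finally, I would apply the assumed modular weak-type inequality to $f_1$ at level $t/2$:
\[
\mu\bigl(\{x:|Tf_1(x)|>t/2\}\bigr)\leq C\int_{\mathbb{R}^n}\varphi\!\left(\frac{c|f_1(x)|}{t/2}\right)d\mu(x) = C\int_{\{|f|>t/(2C_0)\}}\varphi\!\left(\frac{2c|f(x)|}{t}\right)d\mu(x),
\]
where the second equality uses the definition of $f_1$ and the fact that $\varphi(0)=0$. Combining the two displays yields the claim.

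I do not foresee a serious obstacle: the argument is a standard Calderón-style truncation and every step is essentially immediate once one notes that the $L^\infty(\mu)$ hypothesis is precisely what is needed to make $Tf_2$ not contribute to the level set $\{|Tf|>t\}$. The only point requiring mild care is the legitimacy of applying sub-additivity pointwise $\mu$-almost everywhere, which is built into the hypothesis on $T$.
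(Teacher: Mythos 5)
Your proof is correct and complete: the truncation at height $t/(2C_0)$, the use of sub-additivity together with $\|Tf_2\|_{L^\infty(\mu)}\leq t/2$ to pass to the level set $\{|Tf_1|>t/2\}$, and the application of the modular weak-type hypothesis to $f_1$ (with $\varphi(0)=0$ killing the integral off $\{|f|>t/(2C_0)\}$) deliver exactly the stated bound with the same constant $C$. The paper itself does not reprove this lemma but cites \cite{Berra-Carena-Pradolini(MN)}, and your argument is the standard one that proof follows, so there is nothing to add.
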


\begin{proof}[Proof of Corollary~\ref{coro: corolario del teorema principal}]
	The equivalence between $\Phi$ and $\Psi$ imply that there exist positive constants $A$ and $B$ such that
	\[A\Psi(t)\leq \Phi(t)\leq B\Psi(t),\]
	for $t\geq t^*$. Proposition~\ref{propo: normas de funciones equivalentes para t grande son equivalentes} establishes that there exist two positive constants $D$ and $E$ such that
	\[D M_{\Phi}(fv)(x)\leq M_{\Psi}(fv)(x)\leq EM_{\Phi}(fv)(x),\]
	for almost every $x$.
	By applying Theorem~\ref{teo: teorema principal} and setting $c_1=E\max\{\Phi(t^*)+B,1\}$ we have that
	\begin{align*}
	uv^r\left(\left\{x\in \mathbb{R}^n: \frac{M_\Psi(fv)(x)}{M_\Psi v(x)}>t
	\right\}\right)&\leq uv^r\left(\left\{x\in \mathbb{R}^n: \frac{M_\Phi(fv)(x)}{M_\Phi v(x)}>\frac{t}{c_1}
	\right\}\right)\\
	&\leq C \int_{\mathbb{R}^n}\Phi\left(\frac{c_1|f|}{t}\right)uv^r.
	\end{align*}
	Observe that
	\[\|\mathcal{T}_\Psi f\|_{L^{\infty}}=\left\|\frac{M_\Psi (fv)}{M_\Psi v}\right\|_{L^{\infty}}\leq  \|f\|_{L^{\infty}},\]
	which directly implies $\|\mathcal{T}_\Psi f\|_{L^{\infty}(uv^r)}\leq \|f\|_{L^{\infty}(uv^r)}$ since the measure given by $d\mu(x)=u(x)v^r(x)\,dx$ is absolutely continuous with respect to the Lebesgue measure. We now apply Lemma~\ref{lema - implicacion de tipo debil para interpolacion modular} with $T=\mathcal{T}_\Psi$, $C_0=1$, $\varphi=\Phi$ and $\mu$ the measure given above to obtain
	\begin{align*}
	uv^r\left(\left\{x\in \mathbb{R}^n: \frac{M_\Psi(fv)(x)}{M_\Psi v(x)}>t
	\right\}\right)&\leq C\int_{\{x: |f(x)|>t/2\}}\Phi\left(\frac{2c_1|f(x)|}{t}\right)u(x)v^r(x)\,dx\\
	&\leq C\Phi\left(\frac{c_1}{t^*}\right)\int_{\{x: |f(x)|>t/2\}}\Phi\left(\frac{2t^*|f(x)|}{t}\right)u(x)v^r(x)\,dx\\
	&\leq BC\Phi\left(\frac{c_1}{t^*}\right)\int_{\{x: |f(x)|>t/2\}}\Psi\left(\frac{2t^*|f(x)|}{t}\right)u(x)v^r(x)\,dx\\
	&\leq C_1\int_{\mathbb{R}^n}\Psi\left(\frac{C_2|f(x)|}{t}\right)u(x)v^r(x)\,dx.\qedhere
	\end{align*}
\end{proof}

\section{Applications: Mixed inequalities for the generalized fractional maximal operator}\label{section: aplicaciones}
	
We devote this section to prove theorems~\ref{teo: mixta para M_{gamma,Phi}, caso r<p<n/gamma} and \ref{teo: mixta para M_{gamma,Phi}, caso p=r}.

\begin{proof}[Proof of Theorem~\ref{teo: mixta para M_{gamma,Phi}, caso r<p<n/gamma}]
	Define 
	\[\sigma=\frac{nr}{n-r\gamma}, \quad \nu=\frac{n\delta}{n-r\gamma}, \quad \beta=\frac{q}{\sigma}\left(\frac{1}{p}+\frac{1}{r'}\right),\]
	and let $\xi$ be the auxiliary function given by
	\[\xi(t)=\left\{\begin{array}{ccr}
	t^{q/\beta},&\textrm{ if } & 0\leq t\leq 1,\\
	t^\sigma(1+\log^+t)^\nu, & \textrm{ if } &t> 1.
	\end{array}\right.\]
	By virtue of \eqref{eq: inversa generalizada de Phi} we have that
	\[\xi^{-1}(t)t^{\gamma/n}\approx \frac{t^{1/\sigma+\gamma/n}}{(1+\log^+t)^{\nu/\sigma}}=\frac{t^{1/r}}{(1+\log^+t)^{\delta/r}}\approx \Phi^{-1}(t),\]
	for every $t\geq 1$.
	Observe that $\beta>1$: indeed, since $p>r$ we have $q>\sigma$ and thus $q/(\sigma r')>1/r'$. On the other hand, $q/(p\sigma)>1/r$. By combining these two inequalities we have $\beta>1$. Applying Proposition~\ref{propo: estimacion puntual M_{gamma, Phi}} and Lemma~\ref{lema: Jensen para promedios de tipo Luxemburgo} with $\beta$  we can conclude that
	\begin{equation}\label{eq: teo - mixta para M_{gamma,Phi}, caso r<p<n/gamma - eq1}
	M_{\gamma,\Phi}\left(\frac{f_0}{w}\right)(x)\leq C\left[M_\xi\left(\frac{f_0^{p\beta/q}}{w^{\beta}}\right)(x)\right]^{1/\beta}\left(\int_{\mathbb{R}^n}f_0^p(y)\,dy\right)^{\gamma/n}.
	\end{equation}
	%Dado que $1/p+1/r'<1$, tenemos que
	%\[\frac{q}{\beta}=\frac{\sigma}{\frac{1}{p}+\frac{1}{r'}}>\sigma.\]
	Also observe that 
	\begin{equation}\label{eq: teo - mixta para M_{gamma,Phi}, caso r<p<n/gamma - eq2}
	\left(M_\xi(v^\beta)(x)\right)^{1/\beta}\lesssim M_\eta v(x), \quad \textrm{ a.e. }x.
	\end{equation}
	Indeed, it is clear that $\xi(z^\beta)\lesssim \eta(t)$. Given $x$ and a fixed cube $Q$ containing it, we can write
	\begin{align*}
	\frac{1}{|Q|}\int_Q \xi\left(\frac{v^\beta}{\left\|v\right\|_{\eta,Q}^\beta}\right)&\lesssim\frac{1}{|Q|}\int_Q \eta\left(\frac{v}{\left\|v\right\|_{\eta,Q}}\right)\\
	&\leq 1,
	\end{align*}
	which directly implies the estimate.
	
	Notice that $\xi$ is equivalent to a Young function in $\mathfrak{F}_\sigma$, for $t\geq 1$. Since $q(1/p+1/r')=\beta\sigma$, if we set $f_0=|f|wv$, then we can use inequalities \eqref{eq: teo - mixta para M_{gamma,Phi}, caso r<p<n/gamma - eq1} and \eqref{eq: teo - mixta para M_{gamma,Phi}, caso r<p<n/gamma - eq2} and Corollary~\ref{coro: corolario del teorema principal} to estimate
	\begin{align*}
	uv^{\tfrac{q}{p}+\tfrac{q}{r'}}\left(\left\{x: \frac{M_{\gamma,\Phi}(fv)(x)}{M_\eta v(x)}>t\right\}\right)&\lesssim uv^{\beta\sigma}\left(\left\{x: \frac{M_{\gamma,\Phi}(fv)(x)}{\left(M_\xi v^\beta(x)\right)^{1/\beta}}>t\right\}\right)\\
	&\leq uv^{\beta\sigma}\left(\left\{x: \frac{M_\xi\left(f_0^{p\beta/q}w^{-\beta}\right)(x)}{M_\xi v^\beta(x)}>\frac{t^\beta}{\left(\int|f_0|^p\right)^{\beta\gamma/n}}\right\}\right)\\
	&\leq C_1\int_{\mathbb{R}^n}\xi\left(C_2\frac{|f|^{p\beta/q}(wv)^{\beta(p/q-1)}}{t^\beta}\left[\int_{\mathbb{R}^n}|f|^p(wv)^p\right]^{\gamma/n\beta}\right)uv^{\sigma\beta}\\
	&=C_1\int_{\mathbb{R}^n}\xi(\lambda)uv^{\sigma\beta}\\
	&=C_1\left(\int_{A}\xi(\lambda)uv^{\sigma\beta}+\int_{B}\xi(\lambda)uv^{\sigma\beta}\right),
	\end{align*}
	where
	\[\lambda=C_2\frac{|f|^{p\beta/q}(wv)^{\beta(p/q-1)}}{t^\beta}\left[\int_{\mathbb{R}^n}|f|^p(wv)^p\right]^{\gamma/n\beta},\]
	$A=\{x\in \mathbb{R}^n: \lambda(x)\leq 1\}$ y $B=\mathbb{R}^n\backslash A$. By definition of $\xi$ we have that
	\[\int_{A}\xi(\lambda(x))u(x)[v(x)]^{\sigma\beta}\,dx=\int_A [\lambda(x)]^{q/\beta}u(x)[v(x)]^{\sigma\beta}\,dx.\]
	If we set $w=u^{1/q}v^{1/p+1/r'-1}$, then 
	\begin{align*}
	\lambda^{q/\beta}uv^{\sigma\beta}&=C_2^{q/\beta}\frac{|f|^p}{t^q}(wv)^{p-q}\left[\int_{\mathbb{R}^n}|f|^p(wv)^p\right]^{q\gamma/n}uv^{\sigma\beta}\\
	&=C_2^{q/\beta}\frac{|f|^p}{t^q}\left[\int_{\mathbb{R}^n}|f|^p(wv)^p\right]^{q\gamma/n}u^{p/q}v^{\sigma\beta+(p-q)(1/p+1/r')}.
	\end{align*}
	Observe that
	\[\sigma\beta+(p-q)\left(\frac{1}{p}+\frac{1}{r'}\right)=q\left(\frac{1}{p}+\frac{1}{r'}\right)+(p-q)\left(\frac{1}{p}+\frac{1}{r'}\right)=1+\frac{p}{r'}.\]
	Also, notice that
	\[(wv)^p=u^{p/q}v^{1+p/r'-p+p}=u^{p/q}v^{1+p/r'}.\]
	Therefore,
	\begin{align*}
	\int_{A}\xi(\lambda)uv^{\sigma\beta}&\leq \frac{C_2^{q/\beta}}{t^q}\left[\int_{\mathbb{R}^n}|f|^pu^{p/q}v^{1+p/r'}\right]^{q\gamma/n}\left[\int_{\mathbb{R}^n} |f|^pu^{p/q}v^{1+p/r'}\right]\\
	&= \frac{C_2^{q/\beta}}{t^q}\left[\int_{\mathbb{R}^n}|f|^pu^{p/q}v^{1+p/r'}\right]^{1+q\gamma/n}\\
	&=\frac{C_2^{q/\beta}}{t^q}\left[\int_{\mathbb{R}^n}|f|^pu^{p/q}v^{1+p/r'}\right]^{q/p}.
	\end{align*}
	On the other hand, $\lambda(x)>1$ over $B$ and since $\xi$ has an upper type $q/\beta$, we can estimate the integrand by $\lambda^{q/\beta}uv^{\sigma\beta}$. Then we can conclude the estimate by proceeeding like we did in part $A$.
	Thus, we obtain 
	\[uv^{q(1/p+1/r')}\left(\left\{x\in \mathbb{R}^n: \frac{M_{\gamma,\Phi}(fv)(x)}{M_\eta v(x)}>t\right\}\right)^{1/q}\leq C\left[ \int_{\mathbb{R}^n}\left(\frac{|f|}{t}\right)^pu^{p/q}v^{1+p/r'}\right]^{1/p}.\qedhere\]
\end{proof}

\medskip

\begin{proof}[Proof of Theorem~\ref{teo: mixta para M_{gamma,Phi}, caso p=r}]
	Set $\xi(t)=t^q(1+\log^+t)^\nu$, where $\nu=\delta q/r$. Thus $t^{\gamma/n}\xi^{-1}(t)\lesssim \Phi^{-1}(t)$. By applying Proposition~\ref{propo: estimacion puntual M_{gamma, Phi}} with $p=r$ we have that
	\[M_{\gamma,\Phi}\left(\frac{f_0}{w}\right)(x)\leq C\left[M_\xi\left(\frac{f_0^{r/q}}{w}\right)\right](x)\left(\int_{\mathbb{R}^n}f_0^r(y)\,dy\right)^{\gamma/n}.\]
	Observe that in this case we have $\xi =\eta$.
	By setting $f_0=|f|wv$ we can write
	\begin{align*}
	uv^{q}\left(\left\{x: \frac{M_{\gamma,\Phi}(fv)(x)}{M_\eta v(x)}>t\right\}\right)&=uv^{q}\left(\left\{x: \frac{M_{\gamma,\Phi}(f_0/w)(x)}{M_\eta v(x)}>t\right\}\right)\\
	&\leq uv^{q}\left(\left\{x: \frac{M_{\xi}(f_0^{r/q}/w)(x)}{M_\xi v(x)}>\frac{t}{\left(\int f_0^r \right)^{\gamma/n}}\right\}\right).
	\end{align*}
	Since $\xi\in \mathfrak{F}_q$,  we can use the mixed estimate for $M_\xi$ which leads us to
	\begin{equation}\label{eq: eq1 - teo mixta para M_{gamma,Phi}, caso p=r}
	uv^{q}\left(\left\{x: \frac{M_{\gamma,\Phi}(fv)(x)}{M_\eta v(x)}>t\right\}\right)\leq C\int_{\mathbb{R}^n}\xi\left(\frac{f_0^{r/q}\left(\int f_0^r\right)^{\gamma/n}}{wv t}\right)uv^q.
	\end{equation}
	The argument of $\xi$ above can be written as
	
	\begin{align*}
	\frac{f_0^{r/q}\left(\int f_0^r\right)^{\gamma/n}}{wv t}&=\left(\frac{|f|}{t}\right)^{r/q}(wv)^{r/q-1}\left(\int_{\mathbb{R}^n} \left(\frac{|f|}{t}\right)^r(wv)^r\right)^{\gamma/n}\\
	&=\left[\left(\frac{|f|}{t}\right)(wv)^{1-q/r}\left(\int_{\mathbb{R}^n} \left(\frac{|f|}{t}\right)^r(wv)^r\right)^{\gamma q/(nr)}\right]^{r/q}.
	\end{align*}
	Observe that for $0\leq t\leq 1$, $\xi(t^{r/q})=t^r$, and for $t>1$,
	\begin{align*}
	\xi(t^{r/q})&=t^r(1+\log t^{r/q})^\nu\\
	&=t^r\left(1+\frac{r}{q}\log t\right)^\nu,
	\end{align*}
	which implies $\xi(t^{r/q})\leq \Phi_\gamma(t)=t^r(1+\log^+t)^\nu$. Then we can estimate as follows
	\begin{align*}
	\xi\left(\frac{f_0^{r/q}\left(\int_{\mathbb{R}^n}f_0^r\right)^{\gamma/n}}{wv t}\right)&\leq \Phi_\gamma\left(\left(\frac{|f|}{t}\right)(wv)^{1-q/r}\left(\int_{\mathbb{R}^n} \left(\frac{|f|}{t}\right)^r(wv)^r\right)^{\gamma q/(nr)}\right)\\
	&\leq \Phi_\gamma\left(\left[\int_{\mathbb{R}^n}\Phi_\gamma\left(\frac{|f|}{t}\right)(wv)^r\right]^{\gamma q/(nr)}\right)\Phi_\gamma\left(\frac{|f|}{t}(wv)^{1-q/r}\right)
	\end{align*}
	Returning to \eqref{eq: eq1 - teo mixta para M_{gamma,Phi}, caso p=r} and setting $w=u^{1/q}$, the right hand side is bounded by
	\[ \Phi_\gamma\left(\left[\int_{\mathbb{R}^n}\Phi_\gamma\left(\frac{|f|}{t}\right)(wv)^r\right]^{\gamma q/(nr)}\right)\int_{\mathbb{R}^n} \Phi_\gamma\left(\frac{|f|}{t}(wv)^{1-q/r}\right)(wv)^q.\]
	Notice that $\Phi_\gamma(t^{1-q/r})t^q\leq \Psi(t)$. Therefore, the expression above is bounded by 
	\[\Phi_\gamma\left(\left[\int_{\mathbb{R}^n}\Phi_\gamma\left(\frac{|f|}{t}\right)\Psi(u^{1/q}v)\right]^{\gamma q/(nr)}\right)\int_{\mathbb{R}^n} \Phi_\gamma\left(\frac{|f|}{t}\right)\Psi(u^{1/q}v).\]
	To finish, observe that 
	\[t\Phi_\gamma(t^{\gamma q/(nr)})\lesssim t^{1+\gamma q/n}(1+\log^+ t)^\nu= t^{q/r}(1+\log^+ t)^{\delta q/r}=\varphi(t).\qedhere\]
\end{proof}

%BIBLIOGRAFIA

	%\bibliographystyle{amsplain}
	%\bibliography{bibliografia}

\begin{thebibliography}{10}
		
		\bibitem{B-D-P}
		A.~Bernardis, E.~Dalmasso, and G.~Pradolini, \emph{Generalized maximal
			functions and related operators on weighted {M}usielak-{O}rlicz spaces}, Ann.
		Acad. Sci. Fenn. Math. \textbf{39} (2014), no.~1, 23--50.
		
		\bibitem{Berra}
		F.~Berra, \emph{Mixed weak estimates of {S}awyer type for generalized maximal
			operators}, Proc. Amer. Math. Soc. \textbf{147} (2019), no.~10, 4259--4273.
		
		\bibitem{Berra-Carena-Pradolini(MN)}
		F.~Berra, M.~Carena, and G.~Pradolini, \emph{Improvements on {S}awyer type
			estimates for generalized maximal functions}, Math. Nachr., in press (2019).
		
		\bibitem{Berra-Carena-Pradolini(M)}
		\bysame, \emph{Mixed weak estimates of {S}awyer type for commutators of
			generalized singular integrals and related operators}, Michigan Math. J.
		\textbf{68} (2019), no.~3, 527--564.
		
		\bibitem{Berra-Carena-Pradolini(J)}
		\bysame, \emph{Mixed weak estimates of {S}awyer type for fractional integrals
			and some related operators}, J. Math. Anal. Appl. \textbf{479} (2019), no.~2,
		1490--1505.
		
		\bibitem{FB}
		Fabio Berra, \emph{Desigualdades mixtas para operadores del análisis
			armónico}, Ph.D. thesis, Universidad Nacional del Litoral, 2019,
		\url{https://hdl.handle.net/11185/5476}.
		
		\bibitem{CruzUribe-Martell-Perez}
		D.~Cruz-Uribe, J.~M. Martell, and C.~P\'erez, \emph{Weighted weak-type
			inequalities and a conjecture of {S}awyer}, Int. Math. Res. Not. (2005),
		no.~30, 1849--1871.
		
		\bibitem{javi}
		J.~Duoandikoetxea, \emph{Fourier analysis}, Graduate Studies in Mathematics,
		vol.~29, American Mathematical Society, Providence, RI, 2001, Translated and
		revised from the 1995 Spanish original by David Cruz-Uribe.
		
		\bibitem{G-P-S-Sinica}
		O.~Gorosito, G.~Pradolini, and O.~Salinas, \emph{Weighted weak-type estimates
			for multilinear commutators of fractional integrals on spaces of homogeneous
			type}, Acta Math. Sin. (Engl. Ser.) \textbf{23} (2007), no.~10, 1813--1826.
		
		\bibitem{grafakos}
		L.~Grafakos, \emph{Classical and modern {F}ourier analysis}, Pearson Education,
		Inc., Upper Saddle River, NJ, 2004. \MR{2449250}
		
		\bibitem{KR}
		M.~A. Krasnoselski{\u\i} and J.~B. Ruticki{\u\i}, \emph{Convex functions and
			{O}rlicz spaces}, Translated from the first Russian edition by Leo F. Boron.
		P. Noordhoff Ltd., Groningen, 1961.
		
		\bibitem{L-O-P}
		K.~Li, S.~Ombrosi, and C.~P\'{e}rez, \emph{Proof of an extension of {E}.
			{S}awyer's conjecture about weighted mixed weak-type estimates}, Math. Ann.
		\textbf{374} (2019), no.~1-2, 907--929.
		
		\bibitem{Muck72}
		B.~Muckenhoupt, \emph{Weighted norm inequalities for the {H}ardy maximal
			function}, Trans. Amer. Math. Soc. \textbf{165} (1972), 207--226.
		\MR{0293384}
		
		\bibitem{M-W-76}
		B.~Muckenhoupt and R.~L. Wheeden, \emph{Two weight function norm inequalities
			for the {H}ardy-{L}ittlewood maximal function and the {H}ilbert transform},
		Studia Math. \textbf{55} (1976), no.~3, 279--294.
		
		\bibitem{raoren}
		M.~M. Rao and Z.~D. Ren, \emph{Theory of {O}rlicz spaces}, Monographs and
		Textbooks in Pure and Applied Mathematics, vol. 146, Marcel Dekker, Inc., New
		York, 1991. \MR{1113700}
		
		\bibitem{Sawyer}
		E.~Sawyer, \emph{A weighted weak type inequality for the maximal function},
		Proc. Amer. Math. Soc. \textbf{93} (1985), no.~4, 610--614.
		
	\end{thebibliography}

	\def\cprime{$'$}
	\providecommand{\bysame}{\leavevmode\hbox to3em{\hrulefill}\thinspace}
	\providecommand{\MR}{\relax\ifhmode\unskip\space\fi MR }
	% \MRhref is called by the amsart/book/proc definition of \MR.
	\providecommand{\MRhref}[2]{%
		\href{http://www.ams.org/mathscinet-getitem?mr=#1}{#2}
	}
	\providecommand{\href}[2]{#2}

\end{document}